\documentclass[12pt,a4paper,english]{article}
\usepackage[T1]{fontenc}
\usepackage[latin9]{inputenc}
\usepackage{amsmath}
\usepackage{amssymb}

\makeatletter


\setlength{\textwidth}{16cm} \setlength{\textheight}{22cm}
\setlength{\topmargin}{-1.3cm}
\setlength{\oddsidemargin}{0mm}

\usepackage{amsthm}\usepackage{fullpage}\usepackage{makeidx}\usepackage{amsfonts}\usepackage{latexsym}\makeindex

\def\C{{\mathbb{C}}}

\theoremstyle{definition}
\newtheorem{lemma}{Lemma}[section]
\newtheorem{theorem}[lemma]{Theorem}

\newtheorem{proposition}[lemma]{Proposition}
\newtheorem{definition}[lemma]{Definition}

\newtheorem{remark}[lemma]{Remark}
\usepackage{times}

\usepackage{babel}

\usepackage{babel}

\usepackage[blocks]{authblk}
\title{Cyclic permutations of lattice vertex operator algebras
}
\author{Chongying Dong\footnote{Supported by NSF grant DMS-1404741, NSA grant H98230-14-1-0118 and China NSF grant 11371261}}
\affil{Department of Mathematics, University of
California, Santa Cruz, CA 95064 USA}
\author{Feng Xu\footnote{Partially supported by China NSF 11471064}}
\author{Nina Yu}
\affil{Department of Mathematics, University of California, Riverside, CA 92521 USA}

\usepackage{babel}

\usepackage{babel}

\makeatother

\usepackage{babel}
\begin{document}
\maketitle
\begin{abstract}
The irreducible modules of the 2-cycle permutation orbifold models
of lattice vertex operator algebras of rank 1 are classified, the
quantum dimensions of irreducible modules and the fusion rules are
determined.
\end{abstract}

\section{Introduction}

This paper is about the permutation orbifolds of the rank one lattice
vertex operator algebras under the action of the 2-cycle. The permutation
orbifolds study the permutations actions on the tensor products of
vertex operator algebras. Namely, given a vertex operator algebra
$V$, then tensor product $V^{\otimes n}$ of $n$-copies of $V$
as a $\mathbb{C}$-vector space naturally has a vertex operator algebra
structure \cite{FHL}. Any element $\sigma$ of the symmetric group
$S_{n}$ acts on $V^{\otimes n}$ in the obvious way, and gives an
automorphism of $V^{\otimes n}$ of finite order. The fixed points
set $\left(V^{\otimes n}\right)^{\sigma}=\left\{ v\in V^{\otimes n}|\sigma\left(v\right)=v\right\} $
is a vertex operator subalgebra which is called a $\sigma$-permutation
orbifold model.

A systematic study was started in \cite{BHS} for cyclic permutation
orbifolds for affine algebras and Virasoro algebras, where the twisted
sectors, genus one characters and their modular transformations, as
well as the fusion rules were treated. The genus one characters and
modular transformation properties of permutation orbifolds of arbitrary
rational conformal field theories was presented in \cite{Ba}. The
twisted sectors or twisted modules for the permutation orbifolds of
an arbitrary vertex operator algebra orbifold models were constructed
in \cite{BDM}. In the case that $V$ is a lattice vertex operator
algebra, tensor products of $V$ is another lattice vertex operator
algebra again. So the twisted sectors for permutation orbifolds in
this case were given in \cite{Le}, \cite{DL2} already. See \cite{BHL}
for the details on this. The $C_{2}$-cofiniteness of permutation
orbifolds and general cyclic orbifolds have been obtained recently
in \cite{A3}, \cite{A4} and \cite{M}. Conformal nets approach to
permutation orbifolds have been studied in \cite{KLX} where irreducible
representations of the cyclic orbifold are determined and fusion rules
were given for $n=2.$

However, representation theory of permutation orbifolds from the point
of view of vertex operator algebras has not been well understood,
such as rationality, classification of irreducible modules, and fusion
rules. We study the representations of order 2 cyclic permutation
orbifold models for rank one lattice vertex operator algebras in this
paper. In particular we compute the quantum dimensions of the irreducible
modules and determine the fusions. It turns out that such permutation
orbifold can be realized as a simple current extension of the tensor
product of a lattice vertex operator algebra $V_{L}$ with the fixed
point vertex operator algebra $V_{L}^{+}$ under the $-1$ automorphism.
So the main idea is to use the representations of $V_{L}$ and $V_{L}^{+}$
\cite{D1}, \cite{D2}, \cite{DN2}, \cite{AD}, \cite{ADL}, \cite{A2},
\cite{DJL} to understand the representation theory of the permutation
orbifolds such as classification of irreducible modules and the fusion
rules. The results of this paper agree with that of \cite{KLX} in the setting of conformal nets.

The paper is organized as follows: $ $\S2 and \S3 are preliminaries
on the vertex operator algebras theory framework. In these sections
we give some basic notions that appear in this paper and recall construction
of the lattice vertex operator algebras $V_{\mathbb{Z}\alpha}$ and
$V_{\mathbb{Z}\alpha}^{+}$. In \S4 we study $\left(V_{\mathbb{Z}\alpha}\otimes V_{\mathbb{Z}\alpha}\right)^{\mathbb{Z}_{2}}$,
the 2 cyclic permutation orbifold models for rank one lattice vertex
operator algebras. We prove the rationality of $\left(V_{\mathbb{Z}\alpha}\otimes V_{\mathbb{Z}\alpha}\right)^{\mathbb{Z}_{2}}.$
The classification of the irreducible modules of $\left(V_{\mathbb{Z}\alpha}\otimes V_{\mathbb{Z}\alpha}\right)^{\mathbb{Z}_{2}}$
is given in \S5. The quantum dimensions of all irreducible modules
of $\left(V_{\mathbb{Z}\alpha}\otimes V_{\mathbb{Z}\alpha}\right)^{\mathbb{Z}_{2}}$
are computed in \S6. We apply results from the previous sections
to determine all fusion products in \S7.

\section{Preliminaries}

Let $\left(V,Y,\mathbf{1},\omega\right)$ be a vertex operator algebra
\cite{Bo}, \cite{FLM}. First we recall notions of an automorphism
$g$ of finite order of a vertex operator algebra and its $g$-twisted
modules \cite{FLM}, \cite{DLM2}.

\begin{definition} An \emph{automorphism} $g$ of a vertex operator
algebra $V$ is a linear isomorphism of $V$ satisfying $g\left(\omega\right)=\omega$
and $gY\left(v,z\right)g^{-1}=Y\left(gv,z\right)$ for any $v\in V$.
We denote by $\mbox{Aut}\left(V\right)$ the group of all automorphism
of $V$.

For a subgroup $G\le\mbox{Aut}\left(V\right)$ the fixed point set
$V^{G}=\left\{ v\in V|g\left(v\right)=v,\forall g\in G\right\} $
has a vertex operator algebra structure. \end{definition} Let $g$
be an automorphism of a vertex operator algebra $V$ of order $T$.
Denote the decomposition of $V$ into eigenspaces of $g$ as:

\[
V=\oplus_{r\in\mathbb{Z}/T\text{\ensuremath{\mathbb{Z}}}}V^{r}
\]
where $V^{r}=\left\{ v\in V|gv=e^{2\pi ir/T}v\right\} $.

\begin{definition}A \emph{weak $g$-twisted $V$-module} $M$ is
a vector space with a linear map
\[
Y_{M}:V\to\left(\text{End}M\right)\{z\}
\]

\[
v\mapsto Y_{M}\left(v,z\right)=\sum_{n\in\mathbb{Q}}v_{n}z^{-n-1}\ \left(v_{n}\in\mbox{End}M\right)
\]

which satisfies the following: for all $0\le r\le T-1$, $u\in V^{r}$,
$v\in V$, $w\in M$,

\[
Y_{M}\left(u,z\right)=\sum_{n\in-\frac{r}{T}+\mathbb{Z}}u_{n}z^{-n-1},
\]

\[
u_{l}w=0\ for\ l\gg0,
\]

\[
Y_{M}\left(\mathbf{1},z\right)=Id_{M},
\]

\[
z_{0}^{-1}\text{\ensuremath{\delta}}\left(\frac{z_{1}-z_{2}}{z_{0}}\right)Y_{M}\left(u,z_{1}\right)Y_{M}\left(v,z_{2}\right)-z_{0}^{-1}\delta\left(\frac{z_{2}-z_{1}}{-z_{0}}\right)Y_{M}\left(v,z_{2}\right)Y_{M}\left(u,z_{1}\right)
\]

\[
z_{2}^{-1}\left(\frac{z_{1}-z_{0}}{z_{2}}\right)^{-r/T}\delta\left(\frac{z_{1}-z_{0}}{z_{2}}\right)Y_{M}\left(Y\left(u,z_{0}\right)v,z_{2}\right),
\]
where $\delta\left(z\right)=\sum_{n\in\mathbb{Z}}z^{n}$. \end{definition}

\begin{definition}

A $g$-\emph{twisted $V$-module} is a weak $g$-twisted $V$-module
$M$ which carries a $\mathbb{C}$-grading induced by the spectrum
of $L(0)$ where $L(0)$ is the component operator of $Y(\omega,z)=\sum_{n\in\mathbb{Z}}L(n)z^{-n-2}.$
That is, we have $M=\bigoplus_{\lambda\in\mathbb{C}}M_{\lambda},$
where $M_{\lambda}=\{w\in M|L(0)w=\lambda w\}$. Moreover, $\dim M_{\lambda}$
is finite and for fixed $\lambda,$ $M_{\frac{n}{T}+\lambda}=0$ for
all small enough integers $n.$

\end{definition}

\begin{definition}An \emph{admissible $g$-twisted $V$-module} $M=\oplus_{n\in\frac{1}{T}\mathbb{Z}_{+}}M\left(n\right)$
is a $\frac{1}{T}\mathbb{Z}_{+}$-graded weak $g$-twisted module
such that $u_{m}M\left(n\right)\subset M\left(\mbox{wt}u-m-1+n\right)$
for homogeneous $u\in V$ and $m,n\in\frac{1}{T}\mathbb{Z}.$ $ $

\end{definition}

If $g=Id_{V}$ we have the notions of weak, ordinary and admissible
$V$-modules \cite{DLM2}.

\begin{definition}A vertex operator algebra $V$ is called \emph{$g$-rational}
if the admissible $g$-twisted module category is semisimple. $V$
is called \emph{rational} if $V$ is $1$-rational. \end{definition}

It is proved in \cite{DLM2} that if $V$ is a $g$-rational vertex
operator algebra, then there are only finitely many irreducible admissible
$g$-twisted $V$-modules up to isomorphism and and any irreducible
admissible $g$-twisted $V$-module is ordinary.

\begin{definition} A vertex operator algebra $V$ is said to be $C_{2}$-cofinite
if $V/C_{2}(V)$ is finite dimensional, where $C_{2}(V)=\langle v_{-2}u|v,u\in V\rangle.$
\end{definition}

\begin{remark} If $V$ is a $C_{2}$-cofinite vertex operator algebra,
then $V$ has only finitely many irreducible admissible modules up
to isomorphism \cite{DLM2,Li}. \end{remark}

We need the contragredient module \cite{FHL} in this paper. \begin{definition}
Let $M=\bigoplus_{n\in\frac{1}{T}\mathbb{Z}_{+}}M(n)$ be an admissible
$g$-twisted $V$-module, the contragredient module $M'$ is defined
as follows:
\[
M'=\bigoplus_{n\in\frac{1}{T}\mathbb{Z}_{+}}M(n)^{*},
\]
where $M(n)^{*}=\mbox{Hom}_{\mathbb{C}}(M(n),\mathbb{C}).$ The vertex
operator $Y_{M'}(v,z)$ is defined for $v\in V$ via
\begin{eqnarray*}
\langle Y_{M'}(v,z)f,u\rangle= & \langle f,Y_{M}(e^{zL(1)}(-z^{-2})^{L(0)}v,z^{-1})u\rangle
\end{eqnarray*}
where $\langle f,w\rangle=f(w)$ is the natural paring $M'\times M\to\mathbb{C}.$
\end{definition}

\begin{remark} 1. $(M',Y_{M'})$ is an admissible $g^{-1}$-twisted
$V$-module \cite{FHL}.

2. We can also define the contragredient module $M'$ for a $g$-twisted
$V$-module $M.$ In this case, $M'$ is a $g^{-1}$-twisted $V$-module.
Moreover, $M$ is irreducible if and only if $M'$ is irreducible.

\end{remark}

Here are the definition of intertwining operators and fusion rules
\cite{FHL}.

\begin{definition} Let $(V,\ Y)$ be a vertex operator algebra and
let $(W^{1},\ Y^{1}),\ (W^{2},\ Y^{2})$ and $(W^{3},\ Y^{3})$ be
$V$-modules. An intertwining operator of type $\left(\begin{array}{c}
W^{1}\\
W^{2\ }W^{3}
\end{array}\right)$ is a linear map
\[
I(\cdot,\ z):\ W^{2}\to\text{\ensuremath{\mbox{Hom}(W^{3},\ W^{1})\{z\}}}
\]

\[
u\to I(u,\ z)=\sum_{n\in\mathbb{Q}}u_{n}z^{-n-1}
\]
satisfying:

(1) for any $u\in W^{2}$ and $v\in W^{3}$, $u_{n}v=0$ for $n$
sufficiently large;

(2) $I(L_{-1}v,\ z)=(\frac{d}{dz})I(v,\ z)$;

(3) (Jacobi Identity) for any $u\in V,\ v\in W^{2}$

\[
z_{0}^{-1}\delta\left(\frac{z_{1}-z_{2}}{z_{0}}\right)Y^{1}(u,\ z_{1})I(v,\ z_{2})-z_{0}^{-1}\delta\left(\frac{-z_{2}+z_{1}}{z_{0}}\right)I(v,\ z_{2})Y^{3}(u,\ z_{1})
\]
\[
=z_{2}^{-1}\left(\frac{z_{1}-z_{0}}{z_{2}}\right)I(Y^{2}(u,\ z_{0})v,\ z_{2}).
\]

We denote the space of all intertwining operators of type $\left(\begin{array}{c}
W^{1}\\
W^{2}\ W^{3}
\end{array}\right)$  by $I_{V}\left(\begin{array}{c}
W^{1}\\
W^{2}\ W^{3}
\end{array}\right).$ Let $N_{W^{2},\ W^{3}}^{W^{1}}=\dim I_{V}\left(\begin{array}{c}
W^{1}\\
W^{2}\ W^{3}
\end{array}\right)$. These integers $N_{W^{2},\ W^{3}}^{W^{1}}$ are usually called the
\textbf{fusion rules}. \end{definition}

\begin{definition} Let $V$ be a vertex operator algebra, and $W^{1},$
$W^{2}$ be two $V$-modules. A module $(W,I)$, where $I\in I_{V}\left(\begin{array}{c}
\ \ W\ \\
W^{1}\ \ W^{2}
\end{array}\right),$ is called a fusion product of $W^{1}$ and $W^{2}$ if for any $V$-module
$M$ and $\mathcal{Y}\in I_{V}\left(\begin{array}{c}
\ \ M\ \\
W^{1}\ \ W^{2}
\end{array}\right),$ there is a unique $V$-module homomorphism $f:W\rightarrow M,$ such
that $\mathcal{Y}=f\circ I.$ As usual, we denote $(W,I)$ by $W^{1}\boxtimes_{V}W^{2}.$
\end{definition}

It is well known that if $V$ is rational, then the fusion product
exists. We shall often consider the fusion product
\[
W^{1}\boxtimes_{V}W^{2}=\sum_{W}N_{W^{1},\ W^{2}}^{W}W
\]
where $W$ runs over the set of equivalence classes of irreducible
$V$-modules.

The following symmetry property of fusion rules is well known \cite{FHL}.

\begin{proposition}\label{fusion rule symmmetry property} Let $W^{i}$
$\left(i=1,2,3\right)$ be $V$-modules. Then

\[
\dim I_{V}\left(_{W^{1}W^{2}}^{\ \ W^{3}}\right)=\dim I_{V}\left(_{W^{2}W^{1}}^{\ \ W^{3}}\right),\dim I_{V}\left(_{W^{1}W^{2}}^{\ \ W^{3}}\right)=\dim I_{V}\left(_{W^{1}\left(\ W^{3}\right)'}^{\ \ \left(W^{2}\right)'}\right).
\]

\end{proposition}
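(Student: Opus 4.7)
The plan is to establish both equalities by constructing explicit linear isomorphisms between the corresponding spaces of intertwining operators, following the approach of Frenkel--Huang--Lepowsky \cite{FHL}.

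\emph{First equality (skew-symmetry).} Given an intertwining operator $I$ of type $\left(\begin{array}{c} W^{3} \\ W^{1}\ W^{2} \end{array}\right)$, I would define $\Omega(I)$ by
$$\Omega(I)(w_{2},z)w_{1} = e^{zL(-1)} I(w_{1},-z) w_{2}$$
for $w_{1}\in W^{1}$ and $w_{2}\in W^{2}$, and verify that $\Omega(I)$ is an intertwining operator of type $\left(\begin{array}{c} W^{3} \\ W^{2}\ W^{1} \end{array}\right)$. Lower truncation transfers because $e^{zL(-1)}$ shifts $z$-exponents only by non-negative integers; the $L(-1)$-derivative property follows from $\frac{d}{dz} e^{zL(-1)} = L(-1) e^{zL(-1)}$; and the Jacobi identity for $\Omega(I)$ reduces, via the substitution $z\to -z$ together with conjugation by $e^{zL(-1)}$ and standard $\delta$-function manipulations, to the Jacobi identity for $I$. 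Applying $\Omega$ twice returns the identity, so $\Omega$ is a linear isomorphism of vector spaces, which gives the first equality.

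\emph{Second equality (contragredient).} Given $I$ of type $\left(\begin{array}{c} W^{3} \\ W^{1}\ W^{2} \end{array}\right)$, I would define $A(I)(w_{1},z): (W^{3})' \to (W^{2})'\{z\}$ through the pairing
$$\langle A(I)(w_{1},z)\,w_{3}', w_{2}\rangle = \langle w_{3}', I(e^{zL(1)}(-z^{-2})^{L(0)} w_{1}, z^{-1})\, w_{2}\rangle$$
for $w_{1}\in W^{1}$, $w_{2}\in W^{2}$, $w_{3}' \in (W^{3})'$. This mirrors the formula used to define the contragredient module, but is now applied to the ``input'' slot of the intertwiner. Verification that $A(I)$ lies in $I_{V}\left(\begin{array}{c} (W^{2})' \\ W^{1}\ (W^{3})' \end{array}\right)$ uses the standard formal-variable identities satisfied by $e^{zL(1)}$ and $(-z^{-2})^{L(0)}$; the Jacobi identity for $A(I)$ translates, via the defining formulas for $Y_{(W^{2})'}$ and $Y_{(W^{3})'}$, back to the Jacobi identity for $I$. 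Using the canonical identification $((W^{i})')'\cong W^{i}$, applying $A$ once more inverts the map, giving the desired isomorphism.

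The main technical obstacle in both steps is the verification of the Jacobi identity for the transformed operator. The manipulations require simultaneous substitutions in $z_{0}, z_{1}, z_{2}$, $\delta$-function identities such as $\delta(\frac{z_{1}-z_{2}}{z_{0}}) = \delta(\frac{-z_{2}+z_{1}}{z_{0}})$, and careful tracking of the conjugations by $e^{zL(-1)}$ and by $e^{zL(1)}(-z^{-2})^{L(0)}$. These formal-power-series calculations are carried out in Propositions 5.5.1--5.5.2 of \cite{FHL}; since the statement is explicitly attributed to that reference, I would cite those results for the detailed bookkeeping rather than reproduce the computation here.
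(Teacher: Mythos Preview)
Your proposal is correct and aligns with the paper's treatment: the paper does not give its own proof but simply records the statement as ``well known'' and attributes it to \cite{FHL}, and your outline is precisely the standard Frenkel--Huang--Lepowsky argument via the $\Omega$ and $A$ maps. One minor caveat: applying $\Omega$ (resp.\ $A$) twice does not literally return the identity---because of the fractional powers of $z$ one must pair $\Omega_{r}$ with $\Omega_{-r-1}$ (and similarly for $A$) to get mutual inverses---but this does not affect the conclusion that the two intertwining spaces are isomorphic.
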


\begin{definition} Let $V$ be a simple vertex operator algebra.
A simple $V$-module $M$ is called \emph{a simple current} if for
any irreducible $V$-module $W$, $W\boxtimes M$ exists and is also
a simple $V$-module. \end{definition}

Let $D$ be a finite abelian group and assume that we have a set of
irreducible simple current $V^{0}$-modules $\left\{ V^{\alpha}|\alpha\in D\right\} $
indexed by $D$.

\begin{definition} An extension $V_{D}=\oplus_{\alpha\in D}V^{\alpha}$
of $V^{0}$ is called a $D$-\emph{graded simple current extension
}if $V_{D}$ carries a structure of a simple vertex operator algebra
such that $Y\left(u^{\alpha},z\right)u^{\beta}\in V^{\alpha+\beta}\left(\left(z\right)\right)$
for any $u^{\alpha}\in V^{\alpha}$ and $xu^{\beta}\in V^{\beta}$.
\end{definition}

\section{The vertex operator algebra $V_{L}$ and $V_{L}^{+}$}

Let $L=\mathbb{Z}\alpha$ be a positive definite even lattice of rank
one with $\left\langle \alpha,\alpha\right\rangle =2k$ for some positive
integer $k$. In this section, we review the construction of rank
one lattice vertex operator algebra from \cite{FLM}. Then we give
some related results about $V_{L}$ and $V_{L}^{+}$ \cite{A1,A2,AD,ADL,DN1,DN2,DN3,DJL,FLM}.

Set $\mathfrak{h}=L\otimes_{\mathbb{Z}}\mathbb{C}$ and extend $\left\langle \cdot,\cdot\right\rangle $
to a $\mathbb{C}$-bilinear form on $\mathfrak{h}$. Let $\mathfrak{h}=\mathbb{C}\otimes_{\mathbb{Z}}L$
and extend $\left\langle \cdot,\cdot\right\rangle $ to a $\mathbb{C}$-bilinear
form on $\mathfrak{h}$. Let $\hat{\mathfrak{h}}=\mathbb{C}\otimes_{\mathbb{Z}}L\oplus\mathbb{C}K$
be the affine Lie algebra associated to the abelian Lie algebra $\mathfrak{h}$
so that
\[
\left[\alpha\left(m\right),\alpha\left(n\right)\right]=2km\delta_{m,-n}K\ \ and\ \ \left[K,\hat{\mathfrak{h}}\right]=0
\]
for any $m,n\in\mathbb{Z}$ where $\alpha\left(m\right)=\alpha\otimes t^{m}$.
Then $\hat{\mathfrak{h}}_{\ge0}=\mathbb{C}\left[t\right]\otimes\mathfrak{h}\oplus\mathbb{C}K$
is a commutative subalgebra. For any $\lambda\in\mathfrak{h},$ we
define a one $\hat{\mathfrak{h}}_{\ge0}$-module $\mathbb{C}e^{\lambda}$
such that $\alpha\left(m\right)e^{\lambda}=\left\langle \lambda,\alpha\right\rangle \delta_{m,0}e^{\lambda}$
and $K\cdot e^{\lambda}=e^{\lambda}$ for $m\ge0$. We denote by
\[
M\left(1,\lambda\right)=U\left(\hat{\mathfrak{h}}\right)\otimes_{U\left(\hat{\mathfrak{h}}_{\ge0}\right)}\otimes\mathbb{C}e^{\lambda}\cong S\left(t^{-1}\mathbb{C}\left[t^{-1}\right]\right)
\]
the $\hat{\mathfrak{h}}$-module induced from $\hat{\mathfrak{h}}_{\ge0}$-module
$\mathbb{C}e^{\lambda}$. Set $M\left(1\right)=M\left(1,0\right)$.
Then there exists a linear map $Y:M\left(1\right)\to\mbox{End}M\left(1\right)\left[\left[z,z^{-1}\right]\right]$
such that $\left(M\left(1\right),Y,\mathbf{1},\omega\right)$ carries
a simple vertex operator algebra structure and $M\left(1,\lambda\right)$
becomes an irreducible $M\left(1\right)$-module for any $\lambda\in\mathfrak{h}$
\cite{FLM}. Let $\mathbb{C}\left[L\right]$ be the group algebra
of $L$ with a basis $e^{\beta}$ for $\beta\in L$. The lattice vertex
operator algebra associated to $L$ is given by
\[
V_{L}=M\left(1\right)\otimes\mathbb{C}\left[L\right].
\]
The dual lattice $L^{\circ}$ of $L$ is
\[
L^{\circ}=\left\{ \lambda\in\mathfrak{h}|\left\langle \alpha,\lambda\right\rangle \in\mathbb{Z}\right\} =\frac{1}{2k}L.
\]

Then $L^{\circ}=\cup_{i=-k+1}^{k}\left(L+\lambda_{i}\right)$ is the
coset decomposition with $\lambda_{i}=\frac{i}{2k}\alpha$. Set $\mathbb{C}\left[L+\lambda_{i}\right]=\oplus_{\beta\in L}\mathbb{C}e^{\beta+\lambda_{i}}$.
Then each $\mathbb{C}\left[L+\lambda_{i}\right]$ is an $L$-submodule.
Set $V_{L+\lambda_{i}}=M\left(1\right)\otimes\mathbb{C}\left[L+\lambda_{i}\right]$.
Then $V_{L}$ is a rational vertex operator algebra and $V_{L+\lambda_{i}}$
for $i=0,1,\cdots,2k-1$ are the irreducible modules for $V_{L}$
\cite{Bo,FLM,D1,DLM1}. Define a linear isomorphism $\theta:V_{L+\lambda_{i}}\to V_{L-\lambda_{i}}$
for $i\in\left\{ 0,1,\cdots,2k-1\right\} $ by
\[
\theta\left(\alpha\left(-n_{1}\right)\alpha\left(-n_{2}\right)\cdots\alpha\left(-n_{k}\right)\otimes e^{\beta+\lambda_{i}}\right)=\left(-1\right)^{k}\alpha\left(-n_{1}\right)\cdots\alpha\left(-n_{k}\right)\otimes e^{-\beta-\lambda_{i}}
\]
where $n_{j}>0$ and $\beta\in L$. Then $\theta$ defines a linear
isomorphism from $V_{L^{\circ}}=M\left(1\right)\otimes\mathbb{C}\left[L^{\circ}\right]$
to itself such that
\[
\theta\left(Y\left(u,z\right)v\right)=Y\left(\theta u,z\right)\theta v
\]
for $u\in V_{L}$ and $v\in V_{L^{\circ}}$. In particular, $\theta$
is an automorphism of $V_{L}$ which induces an automorphism of $M\left(1\right)$.

For any $\theta$-stable subspace $U$ of $V_{L^{\circ}}$, let $U^{\pm}$
be the $\pm1$-eigenspace of $U$ for $\theta$. Then $V_{L}^{+}$
is a simple vertex operator algebra.

Recall the $\theta$-twisted Heisenberg algebra $\mathfrak{h}\left[-1\right]$
and its irreducible module $M\left(1\right)\left(\theta\right)$ from
\cite{FLM}. Note that $L/2L$ has two irreducible modules $T_{s}=\C$
such that $\alpha$ acts as $(-1)^{s}$ for $s=1,2.$ Then $V_{L}^{T_{s}}=M\left(1\right)\left(\theta\right)\otimes T_{{s}}$
is an irreducible $\theta$-twisted module of $V_{L}$ \cite{FLM,D2}.
Define actions of $\theta$ on $M\left(1\right)\left(\theta\right)$
and $V_{L}^{T_{{s}}}$ by
\[
\theta\left(\alpha\left(-n_{1}\right)\cdots\alpha\left(-n_{k}\right)\right)=\left(-1\right)^{k}\alpha\left(-n_{1}\right)\cdots\alpha\left(-n_{k}\right)
\]

\[
\theta\left(\alpha\left(-n_{1}\right)\cdots\alpha\left(-n_{k}\right)\otimes t\right)=\left(-1\right)^{k}\alpha\left(-n_{1}\right)\cdots\alpha\left(-n_{k}\right)\otimes t
\]
for $n_{j}\in\frac{1}{2}+\mathbb{Z}_{+}$and $t\in T_{{s}}$. We denote
the $\pm1$-eigenspaces of $V_{L}^{T_{{s}}}$ under $\theta$ by $\left(V_{L}^{T_{{s}}}\right)^{\pm}$.
Then we have have the following results:

\begin{theorem} Any irreducible $V_{L}^{+}$-module is isomorphic
to one of the following modules:
\[
V_{L}^{\pm},V_{\lambda_{i}+L},V_{\lambda_{k}+L}^{\pm},\left(V_{L}^{T_{{s}}}\right)^{\pm}
\]
for $1\le i\le k-1$ and $s=1,2.$ \end{theorem}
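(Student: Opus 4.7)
The plan is to reduce the classification to the representation theory of $V_L$ itself, in both the untwisted and the $\theta$-twisted sectors, and then analyse the $\theta$-decomposition of each irreducible. By the general orbifold theory for rational vertex operator algebras under a finite cyclic group action (as developed in the cited works of Dong--Li--Mason and Miyamoto--Tanabe), every irreducible $V_L^+$-module arises as a direct summand of some irreducible $g$-twisted $V_L$-module with $g\in\{1,\theta\}$. Hence it is enough to look at the irreducible untwisted modules $V_{L+\lambda_i}$ for $0\le i\le 2k-1$ and the irreducible $\theta$-twisted modules $V_L^{T_s}$ for $s=1,2$, and to describe how $\theta$ acts on them.

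For the untwisted sector, $\theta$ sends $V_{L+\lambda_i}$ to $V_{L-\lambda_i}=V_{L+\lambda_{2k-i}}$. The $\theta$-stable modules are exactly $V_L$ (the case $i=0$) and $V_{L+\lambda_k}$ (the case $i=k$, since $-\lambda_k\equiv\lambda_k \pmod L$); for these, the standard decomposition argument (Schur's lemma together with simplicity of $V_L$) shows that their $\pm1$-eigenspaces $V_L^{\pm}$ and $V_{L+\lambda_k}^{\pm}$ are simple $V_L^+$-modules. For each $1\le i\le k-1$ the pair $\{V_{L+\lambda_i},V_{L-\lambda_i}\}$ is swapped by $\theta$, and the restriction of either to $V_L^+$ is a single simple module, naturally labelled $V_{L+\lambda_i}$ (the restrictions of the two members of the pair being isomorphic as $V_L^+$-modules).

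For the twisted sector, by the construction in \cite{FLM} and \cite{D2} the only irreducible $\theta$-twisted $V_L$-modules are $V_L^{T_s}$ for $s=1,2$. Each is $\theta$-stable for the lift of $\theta$ defined just before the theorem, and the same $\pm$-eigenspace decomposition produces the simple $V_L^+$-modules $(V_L^{T_s})^{\pm}$. Assembling the untwisted and twisted contributions yields exactly the list stated in the theorem.

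The main obstacle is the orbifold reduction step itself: proving that every irreducible $V_L^+$-module really does embed in some (possibly $\theta$-twisted) $V_L$-module. This is a nontrivial general fact whose proof uses $C_2$-cofiniteness and rationality of $V_L$ together with modular-invariance arguments guaranteeing that enough $\theta$-twisted sectors are produced; it is the content of the cited works of Dong--Nagatomo \cite{DN2} and Abe--Dong \cite{AD}. Once this reduction is in hand, the remaining tasks---verifying simplicity of each listed summand and pairwise non-isomorphism across the whole list---are routine, using conformal weights, the $M(1)^+$-module structure and the $\theta$-eigenvalue to separate the candidates.
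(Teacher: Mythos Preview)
The paper does not itself prove this theorem; it records it as a known result and defers to \cite{DN2,AD} in the remark immediately following. So there is no in-paper proof to compare against, only the cited ones.

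Your outline is a reasonable modern sketch, but it differs in method from what those references actually do, and this creates a circularity. The papers \cite{DN2} (rank one) and \cite{AD} (general rank) classify the irreducible $V_L^+$-modules by explicitly computing the Zhu algebra $A(V_L^+)$ and determining its simple modules; they do \emph{not} proceed by first proving that every irreducible $V_L^+$-module occurs inside some $\theta^i$-twisted $V_L$-module. That embedding statement is a \emph{consequence} of their classification, not an input to it. So when you write that the reduction step ``is the content of the cited works of Dong--Nagatomo \cite{DN2} and Abe--Dong \cite{AD},'' you are invoking those papers for exactly the step that they establish by the direct Zhu-algebra computation your sketch is trying to bypass. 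The general orbifold theorems you allude to (of Miyamoto--Tanabe/Carnahan--Miyamoto type, guaranteeing that all irreducibles of $V^G$ arise from twisted $V$-modules) were proved later and in greater generality; if you cite those instead, your outline becomes a valid alternative argument, more conceptual than the original but dependent on heavier machinery.
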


\begin{theorem} $V_{L}^{+}$ is rational. \end{theorem}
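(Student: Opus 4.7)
The plan is to deduce this theorem from the orbifold rationality theorem applied to the pair $(V_L, \theta)$. Concretely, $V_L$ is a rational and $C_2$-cofinite vertex operator algebra (positive definite even lattice case is classical \cite{Bo,FLM,D1,DLM1}), and $\theta$ generates a cyclic group of order $2$, which is in particular finite and solvable. The orbifold rationality theorem of Dong--Jiang--Lin \cite{DJL} then gives immediately that $V_L^{\langle \theta \rangle} = V_L^+$ is rational, settling the theorem by a single citation. First I would verify the two hypotheses of \cite{DJL} at the beginning of the proof: rationality of $V_L$ (standard) and $C_2$-cofiniteness (standard), so the theorem applies verbatim.

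A second, more concrete route is to reproduce Abe's original argument \cite{A2} via the Zhu algebra. The steps would be: (i) show $V_L^+$ is $C_2$-cofinite, which follows from $C_2$-cofiniteness of $V_L$ together with $V_L$ being a free module over $V_L^+$ of rank $2$, so that in particular $A(V_L^+)$ is finite-dimensional; (ii) use the classification of irreducible modules in the previous theorem to produce the lower bound
\[
\dim A(V_L^+) \;\ge\; \sum_{M}\,(\dim M(0))^2,
\]
where $M$ runs over the irreducible $V_L^+$-modules listed above; (iii) produce a matching upper bound by identifying explicit generators of $A(V_L^+)$ (the classes of $\omega$, $\alpha(-1)^2\mathbf{1}$, and the primary vectors $e^{\pm\alpha}$, $e^{\pm\alpha}\pm$ their conjugates in the $\theta$-fixed subspace) and deriving enough relations from the known singular vectors in Fock-type modules. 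Equality of dimensions forces $A(V_L^+)$ to be semisimple, and rationality follows by Zhu's correspondence between simple $A(V_L^+)$-modules and irreducible admissible $V_L^+$-modules.

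The hard part in the explicit route is step (iii): while the lower bound is immediate from the classification, pinning down enough relations in $A(V_L^+)$ to cut its dimension down to the correct value requires careful analysis of bracket products $u *_{A(V_L^+)} v$ for the primary generators and of their interaction with the Virasoro element. Abe's original computation \cite{A2} carries this out for arbitrary rank one positive definite even lattices, and the classification already stated in the preceding theorem shows that the numerology matches. The orbifold-theoretic path via \cite{DJL} bypasses this bookkeeping entirely; since both \cite{A2} and \cite{DJL} are already cited, the cleanest presentation in the paper is simply to record the theorem as a direct consequence of either reference.
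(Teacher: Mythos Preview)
Your conclusion matches the paper exactly: the theorem is stated without proof and attributed in the subsequent Remark to \cite{A2} for the rank one case and to \cite{DJL} for general rank. One small correction: \cite{DJL} is not a general orbifold rationality theorem for arbitrary rational $C_{2}$-cofinite VOAs with finite solvable group action, but is precisely the statement that $V_{L}^{+}$ is rational for positive definite even $L$ of arbitrary rank, so your first route is a direct citation of the desired result rather than a deduction from a broader principle.
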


\begin{remark} The classification of irreducible modules for arbitrary
$V_{L}^{+}$ are obtained in \cite{DN2,AD}. The rationality of $V_{L}^{+}$
is established in \cite{A2} for rank one lattice and in \cite{DJL}
for general case. Fusion rules of all irreducible $V_{L}^{+}$-modules
are given in \cite{A1} for rank one lattice $L$ and in \cite{ADL}
for general case. \end{remark}

\section{Rationality of vertex operator algebra $\left(V_{L}\otimes V_{L}\right)^{\mathbb{Z}_{2}}$}

From now on, we set $V_{L}=V_{\mathbb{Z}\alpha}$ where $\left\langle \alpha,\alpha\right\rangle =2k$
with $k$ a positive integer. The vertex operator algebra $V_{L}\otimes V_{L}$
is isomorphic to the lattice vertex operator algebra $V_{L\oplus L}$,
where $L\oplus L$ is the orthogonal direct sum of two copies of the
lattice $L$. Denote by $\alpha^{1}$, $\alpha^{2}$ the copy of $\alpha$
in the first and second summand of $L\oplus L$ respectively. Let
$\beta_{1}=\alpha^{1}+\alpha^{2}\in L\oplus L$, $\beta_{2}=\alpha^{1}-\alpha^{2}\in L\oplus L$,
then $\left\langle \beta_{i},\beta_{i}\right\rangle =4k$, $i=1,2$
and $\left\langle \beta_{1},\beta_{2}\right\rangle =0$. Thus $\beta_{1}$
and $\beta_{2}$ are orthogonal and $\mathbb{Z}\beta_{1}\cong\mathbb{Z}\beta_{2}\cong\sqrt{2}\mathbb{Z}\alpha$.

Let $L_{0}=\mathbb{Z}\beta_{1}\oplus\mathbb{Z}\beta_{2}$, then
\[
L\oplus L=L_{0}\cup\left(\frac{1}{2}\left(\beta_{1}+\beta_{2}\right)+L_{0}\right).
\]
Since $V_{L_{0}}\cong V_{\mathbb{Z}\beta_{1}}\otimes V_{\mathbb{Z}\beta_{2}}$
as vertex operator algebras and $V_{\frac{1}{2}\left(\beta_{1}+\beta_{2}\right)+L_{0}}\cong V_{\frac{1}{2}\beta_{1}+\mathbb{Z}\beta_{1}}\otimes V_{\frac{1}{2}\beta_{2}+\mathbb{Z}\beta_{2}}$
as $V_{L_{0}}$-modules, we obtain

\[
V_{L\oplus L}\cong V_{L}\otimes V_{L}\cong V_{\mathbb{Z}\beta_{1}}\otimes V_{\mathbb{Z}\beta_{2}}\oplus V_{\frac{1}{2}\beta_{1}+\mathbb{Z}\beta_{1}}\otimes V_{\frac{1}{2}\beta_{2}+\mathbb{Z}\beta_{2}}.
\]

Notice that the 2-cyclic permutation $\sigma\in\mathbb{Z}_{2}$ corresponds
to an automorphism of $V_{L\oplus L}$ lifted from the transpostion
$\left(\delta,\gamma\right)\mapsto\left(\gamma,\delta\right)$ of
$L\oplus L$. In particular, $\sigma$ fixes all vectors in the vertex
operator subalgebra $V_{\mathbb{Z}\beta_{1}}$ of $V_{L\oplus L}$
and acts on $V_{\mathbb{Z}\beta_{2}}$ as the involution $\theta$.
Thus we have the following isomorphism
\begin{equation}
\left(V_{L}\otimes V_{L}\right)^{\mathbb{Z}_{2}}\cong V_{\mathbb{Z}\beta}\otimes V_{\mathbb{Z}\beta}^{+}\oplus V_{\frac{1}{2}\beta+\mathbb{Z}\beta}\otimes V_{\frac{1}{2}\beta+\mathbb{Z}\beta}^{+}\label{Realizaiton of the orbifold}
\end{equation}
where $\left\langle \beta,\beta\right\rangle =4k$.

We need the following result about rationality of a simple current
extension \cite{Y}.

\begin{lemma}\label{simple-current-extension} The simple current
extensions of rational $C_{2}$-cofinite vertex operator algebras
of CFT-type are rational. \end{lemma}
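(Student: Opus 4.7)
The plan is to establish rationality of $V_D$ by classifying its irreducible admissible modules via an induction functor from $V^0$ and then deducing complete reducibility of arbitrary admissible $V_D$-modules. Since $V^0$ is rational, $C_2$-cofinite, and of CFT-type, Huang's theorem equips the category of $V^0$-modules with a modular tensor category structure: all fusion products $\boxtimes_{V^0}$ exist and are exact in each variable, and each simple current $V^\alpha$ is an invertible object, so tensoring with $V^\alpha$ permutes the finite set of isomorphism classes of irreducible $V^0$-modules.

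First I would define, for any irreducible $V^0$-module $W$, the induced module $\mathrm{Ind}(W) = V_D \boxtimes_{V^0} W = \bigoplus_{\alpha\in D}(V^\alpha\boxtimes_{V^0}W)$, each summand of which is irreducible over $V^0$ by the simple current hypothesis. The vertex operator $Y_{V_D}$ should assign to the pair $V^\alpha\otimes(V^\beta\boxtimes W)$ an intertwining operator taking values in $V^{\alpha+\beta}\boxtimes W$; these operators are provided by the universal property of the fusion product, and the Jacobi identity for the resulting $V_D$-action reduces to the pentagon and hexagon coherences of the underlying modular tensor category. This makes $\mathrm{Ind}$ into an exact functor from $V^0$-modules to $V_D$-modules, left adjoint to the restriction functor $\mathrm{Res}$.

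Next, for each irreducible $V^0$-module $W$ with stabilizer $D_W = \{\alpha\in D : V^\alpha\boxtimes W\cong W\}$, one-dimensionality of the relevant simple current fusion spaces identifies $\mathrm{End}_{V_D}(\mathrm{Ind}(W))$ with a twisted group algebra $\mathbb{C}^{\chi}[D_W]$ for a suitable $2$-cocycle $\chi\in H^2(D_W,\mathbb{C}^\times)$; this algebra is semisimple. The resulting decomposition of $\mathrm{Ind}(W)$ yields irreducible $V_D$-modules indexed by $D$-orbits of irreducible $V^0$-modules together with irreducible $\chi$-projective representations of the stabilizer, and Frobenius reciprocity shows every irreducible $V_D$-module arises this way. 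For complete reducibility, given an admissible $V_D$-module $M$, the restriction $M|_{V^0}$ is semisimple by rationality of $V^0$; the counit of the adjunction expresses $M$ as a quotient of $\mathrm{Ind}(M|_{V^0})$, and the latter is a direct sum of induced modules of irreducibles, hence semisimple by the preceding analysis, so $M$ is semisimple.

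The main technical obstacle is verifying the coherence of the $V_D$-module structure on $\mathrm{Ind}(W)$: assembling a family of $V^0$-intertwining operators indexed by $\alpha\in D$ into a genuine vertex operator satisfying the full Jacobi identity on $V_D\otimes\mathrm{Ind}(W)$. Here the simple current hypothesis is essential, since fusion multiplicities are all one and the associativity constraints reduce to a single $2$-cocycle on $D$ with values in $\mathbb{C}^\times$; once this structural input is in place, everything else is category-theoretic bookkeeping via the $(\mathrm{Ind},\mathrm{Res})$ adjunction.
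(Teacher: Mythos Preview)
The paper does not prove this lemma; it is quoted without proof as a known result from Yamauchi \cite{Y}. There is therefore no ``paper's own proof'' to compare against.

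Your sketch is broadly the strategy carried out in \cite{Y} and related literature: build an induction functor $\mathrm{Ind}(W)=V_D\boxtimes_{V^0}W$, identify its $V_D$-endomorphism algebra with a twisted group algebra of the stabilizer $D_W$, and deduce semisimplicity of arbitrary admissible $V_D$-modules from the $(\mathrm{Ind},\mathrm{Res})$ adjunction together with rationality of $V^0$. One point to be careful about: for a general irreducible $V^0$-module $W$, the induced object $\bigoplus_{\alpha}V^\alpha\boxtimes_{V^0}W$ need not carry an \emph{untwisted} $V_D$-module structure; in general it is a $g$-twisted module for some character $g\in\widehat{D}$, determined by the conformal-weight differences and the associativity cocycle. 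For the rationality argument this is harmless, since you begin with an honest admissible $V_D$-module $M$ and the irreducible $V^0$-summands of $\mathrm{Res}\,M$ are exactly those for which induction lands in the untwisted sector; but your sentence ``This makes $\mathrm{Ind}$ into an exact functor from $V^0$-modules to $V_D$-modules'' overstates the target category. With that caveat, the outline is correct and matches the cited reference.
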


From short, we set $\mathcal{U}=V_{\mathbb{Z}\beta}\otimes V_{\mathbb{Z}\beta}^{+}\oplus V_{\frac{1}{2}\beta+\mathbb{Z}\beta}\otimes V_{\frac{1}{2}\beta+\mathbb{Z}\beta}^{+}$.
\begin{proposition} \label{Rationality} The vertex operator algebra
$\mathcal{U}$ is rational.

\end{proposition}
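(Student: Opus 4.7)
The plan is to invoke Lemma~\ref{simple-current-extension}, so I need to present $\mathcal{U}$ as a simple current extension of a rational, $C_{2}$-cofinite vertex operator algebra of CFT-type. The natural candidate for the degree-zero piece is $\mathcal{U}^{0}:=V_{\mathbb{Z}\beta}\otimes V_{\mathbb{Z}\beta}^{+}$, with the $\mathbb{Z}_{2}$-grading given by
\[
\mathcal{U}=\mathcal{U}^{0}\oplus\mathcal{U}^{1},\qquad \mathcal{U}^{1}=V_{\frac{1}{2}\beta+\mathbb{Z}\beta}\otimes V_{\frac{1}{2}\beta+\mathbb{Z}\beta}^{+},
\]
which is inherited from the coset decomposition $L\oplus L=L_{0}\cup(\tfrac{1}{2}(\beta_{1}+\beta_{2})+L_{0})$ used in identification \eqref{Realizaiton of the orbifold}.

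First I would verify the hypotheses for $\mathcal{U}^{0}$. The lattice vertex operator algebra $V_{\mathbb{Z}\beta}$ is rational, $C_{2}$-cofinite, simple, and of CFT-type (Borcherds, Frenkel--Lepowsky--Meurman, Dong). By the two theorems recalled at the end of \S3, $V_{\mathbb{Z}\beta}^{+}$ is rational (Abe \cite{A2}), and it is simple, $C_{2}$-cofinite and of CFT-type (\cite{A1,ABD}-style arguments). Rationality, $C_{2}$-cofiniteness and CFT-type are all preserved under tensor products of vertex operator algebras, so $\mathcal{U}^{0}$ satisfies the hypotheses of Lemma~\ref{simple-current-extension}.

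Next I would verify the simple-current condition. Among the irreducible $V_{\mathbb{Z}\beta}$-modules $V_{\lambda+\mathbb{Z}\beta}$ every module is a simple current, with fusion rule $V_{\lambda+\mathbb{Z}\beta}\boxtimes V_{\mu+\mathbb{Z}\beta}=V_{\lambda+\mu+\mathbb{Z}\beta}$; in particular $V_{\frac{1}{2}\beta+\mathbb{Z}\beta}$ is a simple current and $V_{\frac{1}{2}\beta+\mathbb{Z}\beta}\boxtimes V_{\frac{1}{2}\beta+\mathbb{Z}\beta}\cong V_{\mathbb{Z}\beta}$. Since $\langle\beta,\beta\rangle=4k$, the element $\frac{1}{2}\beta$ corresponds to the distinguished half-point $\lambda_{k}$ in the classification recalled in \S3, so $V_{\frac{1}{2}\beta+\mathbb{Z}\beta}^{+}$ is one of the irreducible $V_{\mathbb{Z}\beta}^{+}$-modules $V_{\lambda_{k}+L}^{+}$. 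From the fusion rules of Abe \cite{A1} for rank-one even lattices, $V_{\frac{1}{2}\beta+\mathbb{Z}\beta}^{+}$ is a simple current and $V_{\frac{1}{2}\beta+\mathbb{Z}\beta}^{+}\boxtimes V_{\frac{1}{2}\beta+\mathbb{Z}\beta}^{+}\cong V_{\mathbb{Z}\beta}^{+}$. Since tensor products of simple currents over tensor product vertex operator algebras are simple currents, $\mathcal{U}^{1}$ is an irreducible simple current $\mathcal{U}^{0}$-module with $\mathcal{U}^{1}\boxtimes_{\mathcal{U}^{0}}\mathcal{U}^{1}\cong \mathcal{U}^{0}$. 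This is precisely the $\mathbb{Z}_{2}$-graded simple current extension data, and the existence of a simple vertex operator algebra structure on $\mathcal{U}$ is already built into the lattice realization \eqref{Realizaiton of the orbifold}. Applying Lemma~\ref{simple-current-extension} yields the rationality of $\mathcal{U}$.

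The main obstacle is bookkeeping in the second step: one must be careful that the half-lattice translate $\frac{1}{2}\beta$ really corresponds to the distinguished coset $\lambda_{k}$ appearing in the $V_{L}^{+}$ classification (so that $V_{\frac{1}{2}\beta+\mathbb{Z}\beta}$ is $\theta$-stable, has well-defined $\pm$ pieces, and is covered by Abe's fusion tables), and that the $+$-piece rather than the $-$-piece is exactly what appears in \eqref{Realizaiton of the orbifold}. Verifying the simple-current fusion $V_{\lambda_{k}+L}^{+}\boxtimes V_{\lambda_{k}+L}^{+}\cong V_{L}^{+}$ is a direct appeal to the fusion rules in \cite{A1}; everything else is standard.
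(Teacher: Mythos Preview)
Your proposal is correct and follows essentially the same approach as the paper: verify that $V_{\mathbb{Z}\beta}\otimes V_{\mathbb{Z}\beta}^{+}$ is rational and $C_{2}$-cofinite, use the known fusion rules for $V_{\mathbb{Z}\beta}$ and $V_{\mathbb{Z}\beta}^{+}$ to see that $V_{\frac{1}{2}\beta+\mathbb{Z}\beta}\otimes V_{\frac{1}{2}\beta+\mathbb{Z}\beta}^{+}$ is a simple current, and then invoke Lemma~\ref{simple-current-extension}. Your write-up is more detailed (explicitly identifying $\tfrac{1}{2}\beta$ with $\lambda_{k}$ and checking the $\mathbb{Z}_{2}$-grading), but the argument is the same.
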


\begin{proof} Since both the vertex operator algebras $V_{\mathbb{Z}\beta}$
and $V_{\mathbb{Z}\beta}^{+}$ are rational and $C_{2}$-cofinite,
$V_{\mathbb{Z}\beta}\otimes V_{\mathbb{Z}\beta}^{+}$ is a rational
$C_{2}$-cofinite vertex operator algebra \cite{DLM1}. By fusion
rules of irreducible modules of $V_{\mathbb{Z}\beta}$ and $V_{\mathbb{Z}\beta}^{+}$,
we see that $V_{\frac{1}{2}\beta+\mathbb{Z}\beta}\otimes V_{\frac{1}{2}\beta+\mathbb{Z}\beta}^{+}$
is a simple current for the vertex operator algebra $V_{\mathbb{Z}\beta}\otimes V_{\mathbb{Z}\beta}^{+}$.
By Lemma \ref{simple-current-extension}, $\mathcal{U}$ is rational.
\end{proof}

\section{Classification of irreducible modules}

We first decompose each irreducible and irreducbile twisted $V_{L}\otimes V_{L}$-modules
into direct sum of irreducible $(V_{L}\otimes V_{L})^{\mathbb{Z}_{2}}$-modules.
Then we show that this gives a complete irreducible $(V_{L}\otimes V_{L})^{\mathbb{Z}_{2}}$-modules.

Recall that the irreducible $V_{L}$-modules are $V_{L+\lambda_{i}}$
for $i=0,...,2k-1.$ The the irreducible $V_{L}\otimes V_{L}$-modules
are $V_{L+\lambda_{i}}\otimes V_{L+\lambda_{j}}$ for $i,j\in\left\{ 0,1,\cdots,2k-1\right\} .$
If $i\not=j$, then $V_{L+\lambda_{i}}\otimes V_{L+\lambda_{j}}$
and $V_{L+\lambda_{j}}\otimes V_{L+\lambda_{i}}$ are isomorphic irreducible
$(V_{L}\otimes V_{L})^{\mathbb{Z}_{2}}$ \cite{DM}, \cite{DY}. 
For short we denote this irreducible module by $W_{\left(i\ j\right)}$
with $i>j$. The number of such irreducible modules is $2k^{2}-k$.

When $i=j$, $V_{L+\lambda_{i}}\otimes V_{L+\lambda_{j}}+V_{L+\lambda_{j}}\otimes V_{L+\lambda_{i}}$
split into two different irreducible representations of $\left(V_{L}\otimes V_{L}\right)^{\mathbb{Z}_{2}}$
by \cite{DY}. We denote these irreducible modules by $W_{\widetilde{\left(i\ \epsilon\right)}}$,
$\epsilon=0,1$. The number of such irreducible modules is $4k$.

It is proved in \cite{BDM} that the category of $\sigma$-twisted
$V_{L}\otimes V_{L}$-modules are isomorphic to the category of $V_{L}$-modules,
thus the number of isomorphism classes of irreducible $\sigma$-twisted
$V_{L}\otimes V_{L}$-modules is equal to the number of isomorphism
classes of irreducible $V_{L}$-modules. From \cite{DY}, each twisted
module can also be decomposed into a direct sum of two irreducible
modules of $\left(V_{L}\otimes V_{L}\right)^{\mathbb{Z}_{2}}$. We
denote these irreducible modules by $W_{\widehat{\left(i\ \epsilon\right)}}$,
$i=0,1,\cdots,2k-1$, $\epsilon=0,1$. The number of such irreducible
modules is $4k$.

This gives  $2k^{2}+7k$ irreducible $(V_{L}\otimes V_{L})^{\mathbb{Z}_{2}}$-modules. In fact, all these irreducible modules are inequivalent
as $(V_{L}\otimes V_{L})^{\mathbb{Z}_{2}}$ is a simple current extension of  $V_{\mathbb{Z}\beta}\otimes V_{\mathbb{Z}\beta}^{+}$
and each irreducible module above can be obtained from an irreducible   $V_{\mathbb{Z}\beta}\otimes V_{\mathbb{Z}\beta}^{+}$-module by using the fusion product.

\begin{proposition}\label{all modules}Let $L=\mathbb{Z}\alpha$
be rank one positive definite even lattice with $\left\langle \alpha,\alpha\right\rangle =2k$
where $k$ is some positive integer.Then $\mathcal{U}$ has at most
$2k^{2}+7k$ irreducible modules up to isomorphism as the following
list:

1) For $0\le i\le2k-1,0\le j\le2k-1$, $i>j,$
\[
\left(i\ j\right)=V_{\mathbb{Z}\beta+\frac{i+j}{4k}\beta}\otimes V_{\mathbb{Z}\beta+\frac{i-j}{4k}\beta}+V_{\mathbb{Z}\beta+\frac{2k+i+j}{4k}\beta}\otimes V_{\mathbb{Z}\beta+\frac{2k-i+j}{4k}\beta}.
\]

(2) For all $i\in\left\{ 0,1,\cdots,2k-1\right\} ,$
\[
\left(\widetilde{i\ 0}\right)=V_{\mathbb{Z}\beta+\frac{i}{2k}\beta}\otimes V_{\mathbb{Z}\beta}^{+}+V_{\mathbb{Z}\beta+\frac{i+k}{2k}\beta}\otimes V_{\mathbb{Z}\beta+\frac{\beta}{2}}^{+},
\]
\[
\left(\widetilde{i\ 1}\right)=V_{\mathbb{Z}\beta+\frac{i}{2k}\beta}\otimes V_{\mathbb{Z}\beta}^{-}+V_{\mathbb{Z}\beta+\frac{i+k}{2k}\beta}\otimes V_{\mathbb{Z}\beta+\frac{\beta}{2}}^{-}.
\]

(3) For $0\le i\le2k-1$, when $k+i$ is even,
\[
\left(\widehat{i\ 0}\right)=V_{\mathbb{Z}\beta+\frac{i}{4k}\beta}\otimes(V_{\mathbb{Z}\beta}^{T_{1}})^{+}+V_{\mathbb{Z}\beta+\frac{2k+i}{4k}\beta}\otimes(V_{\mathbb{Z}\beta}^{T_{1}})^{+},
\]
\[
\left(\widehat{i\ 1}\right)=V_{\mathbb{Z}\beta+\frac{i}{4k}\beta}\otimes(V_{\mathbb{Z}\beta}^{T_{1}})^{-}+V_{\mathbb{Z}\beta+\frac{2k+i}{4k}\beta}\otimes(V_{\mathbb{Z}\beta}^{T_{1}})^{-}.
\]

When $k+i$ is odd,

\[
\left(\widehat{i\ 0}\right)=V_{\mathbb{Z}\beta+\frac{i}{4k}\beta}\otimes(V_{\mathbb{Z}\beta}^{T_{2}})^{+}+V_{\mathbb{Z}\beta+\frac{2k+i}{4k}\beta}\otimes(V_{\mathbb{Z}\beta}^{T_{2}})^{-},
\]
\[
\left(\widehat{i\ 1}\right)=V_{\mathbb{Z}\beta+\frac{i}{4k}\beta}\otimes(V_{\mathbb{Z}\beta}^{T_{2}})^{-}+V_{\mathbb{Z}\beta+\frac{2k+i}{4k}\beta}\otimes(V_{\mathbb{Z}\beta}^{T_{2}})^{+}.
\]
\end{proposition}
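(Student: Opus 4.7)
The plan is to exhibit each of the $2k^{2}+7k$ modules in parts (1)--(3) explicitly as a $\mathcal{U}$-summand of an irreducible untwisted or $\sigma$-twisted $V_{L}\otimes V_{L}$-module, and then to invoke the general orbifold principle that every simple module of a rational $C_{2}$-cofinite $\mathbb{Z}_{2}$-orbifold appears inside a simple $g$-twisted module for some $g\in\mathbb{Z}_{2}$ in order to conclude the ``at most'' bound. The main bookkeeping device will be the change of basis $\beta_{1}=\alpha^{(1)}+\alpha^{(2)}$, $\beta_{2}=\alpha^{(1)}-\alpha^{(2)}$ from \S4, under which $\sigma$ acts as $\mathrm{id}\otimes\theta$ on $V_{L_{0}}=V_{\mathbb{Z}\beta_{1}}\otimes V_{\mathbb{Z}\beta_{2}}$ and $\mathcal{U}$ realises as the index-two simple current extension of $V_{\mathbb{Z}\beta_{1}}\otimes V_{\mathbb{Z}\beta_{2}}^{+}$ by the simple current $J=V_{\mathbb{Z}\beta_{1}+\frac{1}{2}\beta_{1}}\otimes V_{\mathbb{Z}\beta_{2}+\frac{1}{2}\beta_{2}}^{+}$.

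For the untwisted sector I would first decompose $V_{L+\lambda_{i}}\otimes V_{L+\lambda_{j}}$ as a $V_{L_{0}}$-module into the two cosets $L_{0}+\tfrac{i+j}{4k}\beta_{1}+\tfrac{i-j}{4k}\beta_{2}$ and $L_{0}+\tfrac{2k+i+j}{4k}\beta_{1}+\tfrac{2k+i-j}{4k}\beta_{2}$. When $i>j$ the $\beta_{2}$-exponent lies outside $\tfrac{1}{2}\mathbb{Z}\beta_{2}$, so each coset is already irreducible as a $V_{\mathbb{Z}\beta_{1}}\otimes V_{\mathbb{Z}\beta_{2}}^{+}$-module; the identification $V_{\mathbb{Z}\beta_{2}+\gamma}\cong V_{\mathbb{Z}\beta_{2}-\gamma}$ at the $V_{\mathbb{Z}\beta_{2}}^{+}$-level converts the second $\beta_{2}$-exponent into $\tfrac{2k-i+j}{4k}$, and the two cosets are exchanged by $J$-fusion and hence glue into the single irreducible $\mathcal{U}$-module $(i\ j)$ of part (1). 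When $i=j$ the $\beta_{2}$-exponent lies in $\tfrac{1}{2}\mathbb{Z}\beta_{2}$ and each coset splits into $\pm$-eigenspaces under $\theta$; regrouping by the common sign then yields the pair $(\widetilde{i\ 0})$, $(\widetilde{i\ 1})$ of part (2).

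For the $\sigma$-twisted sector I would use \cite{BDM} to get that the irreducible $\sigma$-twisted $V_{L\oplus L}$-modules biject with the irreducible $V_{L}$-modules, so there are $2k$ of them, and then apply the lattice realisation of \cite{Le,DL2} to present each one in the form $V_{\mathbb{Z}\beta_{1}+\frac{i}{4k}\beta_{1}}\otimes V_{\mathbb{Z}\beta_{2}}^{T_{s}}$ for a suitable $s\in\{1,2\}$; splitting the twisted factor into its $\theta$-eigenspaces and joining with the $\mathbb{Z}\beta_{1}$-cosets exchanged by $J$-fusion then produces the formulas for $(\widehat{i\ 0})$, $(\widehat{i\ 1})$ in part (3). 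Adding the three contributions gives $(2k^{2}-k)+4k+4k=2k^{2}+7k$, matching the claimed bound. The hard part, I expect, is the parity analysis in part (3): choosing which $T_{s}$ goes with the coset $\tfrac{i}{4k}\beta_{1}$, and deciding whether the second summand pairs with the same $\theta$-sign (case $k+i$ even) or the opposite $\theta$-sign (case $k+i$ odd), requires tracking how the action of $\beta_{2}$ on $T_{s}$ interacts with the half-integer zero-mode data on the $\tfrac{i}{4k}\beta_{1}$-coset, and this is where the dichotomy in the stated formulas originates.
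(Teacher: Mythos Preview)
Your approach is valid but takes a genuinely different route from the paper's proof. You work \emph{top-down}: decompose each irreducible untwisted or $\sigma$-twisted $V_{L}\otimes V_{L}$-module into its $\mathcal{U}$-constituents via the $\beta_{1},\beta_{2}$ coordinates, and then cap the count at $2k^{2}+7k$ by invoking the general orbifold completeness principle (every irreducible $V^{G}$-module occurs inside some $g$-twisted $V$-module). The paper instead argues \emph{bottom-up}, entirely within the simple-current-extension picture and without ever touching twisted $V_{L}\otimes V_{L}$-modules or general orbifold theorems: since any irreducible $\mathcal{U}$-module restricts to a sum of irreducible $V_{\mathbb{Z}\beta}\otimes V_{\mathbb{Z}\beta}^{+}$-modules, it suffices to let $W$ run over \emph{all} irreducible $V_{\mathbb{Z}\beta}\otimes V_{\mathbb{Z}\beta}^{+}$-modules, form the induced object $\mathcal{U}\cdot W=W\oplus(J\boxtimes W)$ using the known fusion rules for $V_{\mathbb{Z}\beta}$ and $V_{\mathbb{Z}\beta}^{+}$, and discard those $W$ for which $\mathcal{U}\cdot W$ fails to be $\mathbb{Z}$-graded. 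Counting the survivors gives the bound $2k^{2}+7k$ directly. The payoff of the paper's method is that it is self-contained, and the parity dichotomy in part~(3) that you flag as the ``hard part'' becomes a one-line conformal-weight check: $\mathcal{U}\cdot\bigl(V_{\mathbb{Z}\beta+\frac{i}{4k}\beta}\otimes(V_{\mathbb{Z}\beta}^{T_{s}})^{\pm}\bigr)$ is $\mathbb{Z}$-graded precisely when $k+i$ has the parity matching $s$, with no need to track zero-mode actions on $T_{s}$. Your route, by contrast, keeps the original permutation-orbifold interpretation visible throughout and avoids the case analysis over $W$, at the cost of importing a substantially deeper external result for the upper bound.
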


\begin{proof} Since $V_{\mathbb{Z}\beta}\otimes V_{\mathbb{Z}\beta}^{+}$
is a vertex operator subalgebra of $\mathcal{U}$, by Proposition
\ref{Rationality}, each irreducible $\mathcal{U}$-module $M$ is
a direct sum of irreducible $V_{\mathbb{Z}\beta}\otimes V_{\mathbb{Z}\beta}^{+}$-modules.
Let $W$ be an irreducible $V_{\mathbb{Z}\beta}\otimes V_{\mathbb{Z}\beta}^{+}$-module,
we define
\[
\mathcal{U}\cdot W=\left(V_{\mathbb{Z}\beta}\otimes V_{\mathbb{Z}\beta}^{+}\right)\boxtimes_{V_{\mathbb{Z}\beta}\otimes V_{\mathbb{Z}\beta}^{+}}W\oplus\left(V_{\frac{1}{2}\beta+\mathbb{Z}\beta}\otimes V_{\frac{1}{2}\beta+\mathbb{Z}\beta}^{+}\right)\boxtimes_{V_{\mathbb{Z}\beta}\otimes V_{\mathbb{Z}\beta}^{+}}W.
\]
That is, the fusion product of $\mathcal{U}$ and $W$ as $V_{\mathbb{Z}\beta}\otimes V_{\mathbb{Z}\beta}^{+}$-modules.
Then $\mathcal{U}\cdot W$ is a $\mathcal{U}$-module only if $\mathcal{U}\cdot W$
is $\mathbb{Z}$-graded. First we prove that each module above is
an irreducible $\mathcal{U}$-module.

i) If $W=V_{\mathbb{Z}\beta+\frac{i}{4k}\beta}\otimes V_{\mathbb{Z}\beta+\frac{j}{4k}\beta}$
for some $i\in\left\{ 0,1,\cdots4k-1\right\} $, $j\in\left\{ 1,2\cdots,2k-1\right\} $,
then by fusion rules of irreducible $V_{\mathbb{Z}\beta}$-modules
and $V_{\mathbb{Z}\beta}^{+}$-modules \cite{DL1,A1}
\[
\mathcal{U}\cdot W=V_{\mathbb{Z}\beta+\frac{i}{4k}\beta}\otimes V_{\mathbb{Z}\beta+\frac{j}{4k}\beta}+V_{\mathbb{Z}\beta+\frac{2k+i}{4k}\beta}\otimes V_{\mathbb{Z}\beta+\frac{2k-j}{4k}\beta}.
\]
From which we can see that $\mathcal{U}\cdot W$ is a $\mathcal{U}$-module
only if $i-j\in2\mathbb{Z}$. Denote
\[
\left(i\ j\right)=V_{\mathbb{Z}\beta+\frac{i+j}{4k}\beta}\otimes V_{\mathbb{Z}\beta+\frac{i-j}{4k}\beta}+V_{\mathbb{Z}\beta+\frac{2k+i+j}{4k}\beta}\otimes V_{\mathbb{Z}\beta+\frac{2k-i+j}{4k}\beta},
\]
 then $\{\left(i\ j\right)|0\le i\le2k-1,0\le j\le2k-1,i>j\}$ gives
all $\mathcal{U}\cdot W$ that are irreducible $\mathcal{U}$-modules
up to isomorphism. The number of such modules is $2k^{2}-k$.

ii) If $W=V_{\mathbb{Z}\beta+\frac{j}{4k}\beta}\otimes V_{\mathbb{Z}\beta}^{+}$,
$j=0,1\cdots,4k-1,$ then by fusion rules of irreducible modules for
$V_{\mathbb{Z}\beta}$ and $V_{\mathbb{Z}\beta}^{+}$ \cite{DL1,A1},
\[
\mathcal{U}.W=V_{\mathbb{Z}\beta+\frac{j}{4k}\beta}\otimes V_{\mathbb{Z}\beta}^{+}+V_{\mathbb{Z}\beta+\frac{2k+j}{4k}\beta}\otimes V_{\mathbb{Z}\beta+\frac{1}{2}\beta}^{+}.
\]

So $\mathcal{U}\cdot W$ is a $\mathcal{U}$-module only if $j\in2\mathbb{Z}$.
Let $j=2i$ with $i\in\mathbb{Z}$, then we see that $\left(\widetilde{i\ 0}\right)=V_{\mathbb{Z}\beta+\frac{i}{2k}\beta}\otimes V_{\mathbb{Z}\beta}^{+}+V_{\mathbb{Z}\beta+\frac{i+k}{2k}\beta}\otimes V_{\mathbb{Z}\beta+\frac{\beta}{2}}^{+}$,
$i=0,1,\cdots,2k-1$ are irreducible $\mathcal{U}$-modules.

Similarly, if $W=V_{\mathbb{Z}\beta+\frac{j}{4k}\beta}\otimes V_{\mathbb{Z}\beta}^{-}$,
$V_{\mathbb{Z}\beta+\frac{j}{4k}\beta}\otimes V_{\mathbb{Z}\beta+\frac{\beta}{2}}^{+}$
or $V_{\mathbb{Z}\beta+\frac{j}{4k}\beta}\otimes V_{\mathbb{Z}\beta+\frac{\beta}{2}}^{-}$,
with $j=0,1,\cdots,4k-1$, then $\mathcal{U}\cdot W$ is a $\mathcal{U}$-module
only if $j\in2\mathbb{Z}$. Similarly we let $j=2i$, $i\in\mathbb{Z}$,
denote these $\mathcal{U}\cdot W$ by $\left(\widetilde{i\ 1}\right)$,
$\left(\widetilde{k+i\ 0}\right)$, and $\left(\widetilde{k+i\ 1}\right)$,
$i=0,1,\cdots,2k-1$ respectively.

In this case, $\left\{ \widetilde{\left(i\ \epsilon\right)}|0\le i\le2k-1,\epsilon=0,1\right\} $
gives all $\mathcal{U}\cdot W$ that are irreducible $\mathcal{U}$-modules
up to isomorphism. The number of irreducible $\mathcal{U}$-modules
up to isomorphism is $4k$.

iii) If $W=V_{\mathbb{Z}\beta+\frac{i}{4k}\beta}\otimes V_{\mathbb{Z}\beta}^{T_{1},+}$
for some $i\in\left\{ 0,1,\cdots,4k-1\right\} $, it is easy to check
that if $k+i\in2\mathbb{Z}$, $\mathcal{U}\cdot W=V_{\mathbb{Z}\beta+\frac{i}{4k}\beta}\otimes(V_{\mathbb{Z}\beta}^{T_{1}})^{+}+V_{\mathbb{Z}\beta+\frac{2k+i}{4k}\beta}\otimes(V_{\mathbb{Z}\beta}^{T_{1}})^{+}$
is an irreducible $\mathcal{U}$-module. Denote it by $\left(\widehat{i\ 0}\right)$.
Similarly, we can check that , $V_{\mathbb{Z}\beta+\frac{2k+i}{4k}\beta}\otimes(V_{\mathbb{Z}\beta}^{T_{1}})^{-}+V_{\mathbb{Z}\beta+\frac{i}{4k}\beta}\otimes(V_{\mathbb{Z}\beta}^{T_{1}})^{-}$
is an irreducible $\mathcal{U}$-module only if $k+i$ is even. We
denote it by $\left(\widehat{i\ 1}\right).$ We see $\left\{ \widehat{\left(i\ \epsilon\right)}|0\le i\le2k-1,k+i\in2\mathbb{Z}\right\} $
gives all $\mathcal{U}\cdot W$ that are irreducible $\mathcal{U}$-modules
up to isomorphism.The number of such non-isomorphic irreducible $\mathcal{U}$-module
is $2k$.

By similar argument, we can prove that when $W=V_{\mathbb{Z}\beta+\frac{i}{4k}\beta}\otimes V_{\mathbb{Z}\beta}^{T_{2},\pm}$
for some $i\in\left\{ 0,1,\cdots,4k-1\right\} $, $\mathcal{U}\cdot W$
is an irreducible $\mathcal{U}$-module only if $k+i\in2\mathbb{Z}+1$
. In this case, $\mathcal{U}\cdot W$ is isomorphic to $\widehat{\left(i\ 0\right)}$
and $\widehat{\left(i\ 1\right)}$ respectively as listed above for
the case $k+i$ is odd. The number of irreducible $\mathcal{U}$-modules
up to isomorphism is $2k$.

Since $W$ runs over all irreducible $V_{\mathbb{Z}\beta}\otimes V_{\mathbb{Z}\beta}^{+}$-modules,
from above we see that the number of irreducible $\mathcal{U}$-modules
up to isomorphism is at most $2k^{2}+7k$. \end{proof}

\begin{definition} A vertex opertor algebra $V$ is graded by an
abelian group $D$ if $V=\oplus_{\alpha\in D}V^{\alpha}$, and $u_{n}v\in V^{\alpha+\beta}$
for any $u\in V^{\alpha}$, $v\in V^{\beta}$, and $n\in\mathbb{Z}$.
\end{definition}

\begin{proposition} Let $D$ be an abelian group and $\tilde{V}=\sum_{\alpha\in D}V^{\alpha}$
is a simple $D$-graded vertex operator algebra such that $V^{0}$
is a rational vertex operator subalgebra of $\tilde{V}$ and $V^{\alpha}\not=0$
for all $\alpha\in D$ with $N_{V^{\alpha}V^{\beta}}^{V^{\gamma}}=\delta_{\alpha+\beta,\gamma}$
for all $\alpha,\beta,\gamma\in D$. Let $M=\sum_{\alpha\in D}M^{\alpha}$
be an irreducible $\tilde{V}$-module such that (1) each $ M^{\beta}$ is an irreducible $V^0$-module,
(2) $M^{\alpha}$ and $M^{\beta}$ are inequivalent
for $\alpha\not=\beta,$ and (3) $V^{\alpha}\cdot M^{\beta}=M^{\alpha+\beta}$ for all
$\alpha$ and $\beta.$ Then the $\tilde{V}$-module structure of
$M$ is determined uniquely by the $V^{0}$-module structure of $M$,
that is, if $\left(M,\overline{Y}_{M}\right)$ is also a simple $\tilde{V}$-module
with $\overline{Y}_{M}\left(u,z\right)=Y_{M}\left(u,z\right)$ for
all $u\in V^{0},$ then $\left(M,\overline{Y}_{M}\right)$ and $\left(M,Y_{M}\right)$
are isomorphic. \end{proposition}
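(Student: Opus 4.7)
The plan is to construct an explicit $\tilde{V}$-module isomorphism $f\colon(M,Y_M)\to(M,\overline{Y}_M)$ of the form $f|_{M^\alpha}=c_\alpha\cdot\mathrm{id}_{M^\alpha}$ for suitable scalars $c_\alpha\in\mathbb{C}^{\times}$. Because $Y_M$ and $\overline{Y}_M$ agree on $V^0$, any $V^0$-module endomorphism of $M$ automatically intertwines both actions of $V^0$; and since the $M^\alpha$ are irreducible and pairwise inequivalent $V^0$-modules, Schur's lemma forces every such endomorphism to have this diagonal form. So the whole problem reduces to producing scalars $c_\alpha$ such that
\[
c_{\alpha+\beta}\,Y_M(u,z)w \;=\; \overline{Y}_M(u,z)\,c_\beta w
\qquad\text{for } u\in V^\alpha,\; w\in M^\beta.
\]

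The key observation is that for each $(\alpha,\beta)$, the restriction $\mathcal{Y}_{\alpha,\beta}:=Y_M|_{V^\alpha\otimes M^\beta}$ is a $V^0$-intertwining operator of type $\binom{M^{\alpha+\beta}}{V^\alpha\,M^\beta}$, and similarly for $\overline{\mathcal{Y}}_{\alpha,\beta}$. The simple-current hypothesis $N^{V^\gamma}_{V^\alpha V^\beta}=\delta_{\alpha+\beta,\gamma}$ says $V^\alpha$ is a simple current for $V^0$, so $V^\alpha\boxtimes_{V^0}M^\beta$ is irreducible, and hypothesis (3) identifies it with $M^{\alpha+\beta}$. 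Hence the space $I_{V^0}\binom{M^{\alpha+\beta}}{V^\alpha\,M^\beta}$ is one-dimensional, spanned by the nonzero operator $\mathcal{Y}_{\alpha,\beta}$, so I may write $\overline{\mathcal{Y}}_{\alpha,\beta}=\lambda_{\alpha,\beta}\,\mathcal{Y}_{\alpha,\beta}$ for unique scalars $\lambda_{\alpha,\beta}\in\mathbb{C}$, with $\lambda_{0,\beta}=1$ immediately from $\overline{Y}_M|_{V^0}=Y_M|_{V^0}$.

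With this setup the intertwining condition becomes $c_{\alpha+\beta}=\lambda_{\alpha,\beta}\,c_\beta$. Next I would derive the cocycle identity $\lambda_{\alpha+\beta,\gamma}=\lambda_{\alpha,\beta+\gamma}\,\lambda_{\beta,\gamma}$ by applying module associativity to $u\in V^\alpha$, $v\in V^\beta$, $w\in M^\gamma$ under both $Y_M$ and $\overline{Y}_M$: the $\lambda$-scalars factor out of the $\overline{Y}_M$ side, and comparison against the $Y_M$ identity forces the relation once one notes that $Y_M(Y(u,z_0)v,z)w$ is nonzero (using (3) together with the fact that $Y(u,z_0)v$ sweeps out $V^{\alpha+\beta}$). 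Setting $\gamma=0$ gives $\lambda_{\alpha+\beta,0}=\lambda_{\alpha,\beta}\,\lambda_{\beta,0}$, so the definition $c_\alpha:=\lambda_{\alpha,0}$ (with $c_0=\lambda_{0,0}=1$) satisfies the required recursion. Iterating the cocycle shows that for any chain $\gamma_1+\cdots+\gamma_k=\delta$ the product $\lambda_{\gamma_1,\gamma_2+\cdots+\gamma_k}\cdots\lambda_{\gamma_{k-1},\gamma_k}\lambda_{\gamma_k,0}$ telescopes to $\lambda_{\delta,0}$; combined with irreducibility of $(M,\overline{Y}_M)$, which forces the $\overline{Y}_M$-submodule generated by a nonzero vector of $M^0$ to meet every $M^\delta$ (note $M^0\neq 0$ since $V^{-\gamma}\boxtimes_{V^0}M^\gamma=M^0$ is irreducible for any nonzero $M^\gamma$), this shows each $c_\delta=\lambda_{\delta,0}$ is nonzero. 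Hence $f$ is invertible and yields the desired $\tilde{V}$-module isomorphism.

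The main obstacle will be the formal-variables bookkeeping in the cocycle derivation: one must ensure that the scalars $\lambda$ can be legitimately read off from the associativity identity, which amounts to verifying that enough of the $Y_M$-composite expressions are independent as formal series in $z_0,z_1,z_2$ to justify cancellation rather than the identity collapsing trivially.
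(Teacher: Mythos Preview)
Your proposal is correct and follows essentially the same argument as the paper: compare the two vertex operators via scalars $\lambda_{\alpha,\beta}$ coming from the one-dimensional intertwiner spaces, extract the cocycle relation $\lambda_{\alpha+\beta,\gamma}=\lambda_{\alpha,\beta+\gamma}\lambda_{\beta,\gamma}$ from associativity, specialize to $\gamma=0$, and build the diagonal isomorphism from the $\lambda_{\alpha,0}$. The only cosmetic difference is that the paper defines its isomorphism as $w_\alpha\mapsto \lambda_{\alpha,0}^{-1}w_\alpha$ (so it goes $(M,\overline{Y}_M)\to(M,Y_M)$) whereas you use $c_\alpha=\lambda_{\alpha,0}$; your additional care in justifying that the $\lambda_{\alpha,0}$ are nonzero is a point the paper simply asserts.
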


\begin{proof} By fusion rules, for any $u\in V^{\alpha},$
$w\in M^{\beta}$, $\alpha,\beta\in D$, there exists nonzero constant
$\lambda_{\alpha,\beta}$ such that $\overline{Y}\left(u,z\right)w=\lambda_{\alpha,\beta}Y\left(u,z\right)w$.
Since $\overline{Y}_{M}\left(u,z\right)=Y_{M}\left(u,z\right)$ for
all $u\in V^{0},$ we have $\lambda_{0},_{\alpha}=1$ for any $\alpha\in D$.

For any $\alpha,\beta,\gamma\in D$, let $u\in V^{\alpha},$ $v\in V^{\beta},$
$w\in M^{\gamma}$, by associativity, there exists nonnegative integer
$k$ such that

\begin{eqnarray*}
\left(z_{1}+z_{2}\right)^{k}\overline{Y}_{M}\left(u,z_{1}\right)\overline{Y}_{M}\left(v,z_{2}\right)w & = & \left(z_{1}+z_{2}\right)^{k}\overline{Y}_{M}\left(Y\left(u,z_{0}\right)v,z_{2}\right)w
\end{eqnarray*}

and that
\[
\left(z_{1}+z_{2}\right)^{k}Y_{M}\left(u,z_{1}\right)Y_{M}\left(v,z_{2}\right)w=\left(z_{1}+z_{2}\right)^{k}Y_{M}\left(Y\left(u,z_{0}\right)v,z_{2}\right)w
\]

Thus we get
\begin{equation}
\lambda_{\alpha,\beta+\gamma}\cdot\lambda_{\beta,\gamma}=\lambda_{\alpha+\beta,\gamma}\label{eq1}
\end{equation}
for any $\alpha,\beta,\gamma\in D$. In particular, let $\gamma=0,$we
obtain $\lambda_{\alpha+\beta,0}=\lambda_{\alpha,\beta}\cdot\lambda_{\beta,0}$.

Let $f:\overline{\sum_{\alpha}M^{\alpha}}\to\sum_{\alpha}M^{\alpha}$
be defined by $f\left(\sum_{\alpha}w_{\alpha}\right)=\sum_{\alpha}\frac{1}{\lambda_{\alpha,0}}w_{\alpha}$
where $w_{\alpha}\in M^{\alpha}$. Then clearly $f$ is a linear isomorphism.
Let $u\in V^{\alpha}$, $w\in M^{\beta},$ then

\begin{eqnarray*}
f\left(\overline{Y}_{M}\left(u,z\right)w\right) & = & f\left(\lambda_{\alpha,\beta}Y\left(u,z\right)w\right)\\
 & = & \frac{1}{\lambda_{\alpha+\beta,0}}\cdot\lambda_{\alpha,\beta}\cdot Y\left(u,z\right)w\\
 & = & \lambda_{\beta,0}Y_{M}\left(u,z\right)w\\
 & = & Y_{M}\left(u,z\right)f\left(w\right).
\end{eqnarray*}
Thus $f$ is an isomorphism of $\tilde{V}$-modules.
\end{proof}

Combining the above two propositons, we get the following results:

\begin{theorem} Let $L=\mathbb{Z}\alpha$ with $\left\langle \alpha,\alpha\right\rangle =2k$
with $k$ some positive integer. Then any irreducible $\left(V_{L}\otimes V_{L}\right)^{\mathbb{Z}_{2}}$-module
is isomorphic to one of the following modules:

\[
\left\{ \left(i\ j\right),\ \widetilde{\left(i\ \epsilon\right)},\ \widehat{\left(i\ \epsilon\right)}|0\le i,j\le2k-1,i>j,\epsilon=0,1\right\}.
\]
\end{theorem}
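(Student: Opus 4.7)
The plan is to sandwich the number of irreducible $\mathcal{U}$-modules between a lower bound supplied by an explicit list of pairwise inequivalent modules and the upper bound of $2k^{2}+7k$ already established in Proposition \ref{all modules}.

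First, I would collect the $2k^{2}+7k$ candidate modules constructed in the preceding paragraphs of \S5: the $2k^{2}-k$ modules $W_{(i\ j)}$, $i>j$, obtained by restricting $V_{L+\lambda_{i}}\otimes V_{L+\lambda_{j}}$ (whose isomorphism class is $\mathbb{Z}_{2}$-stable, hence irreducible over $\mathcal{U}$ by \cite{DM,DY}); the $4k$ modules $W_{\widetilde{(i\ \epsilon)}}$ arising as the two $\pm 1$-eigenspaces of the permutation on $V_{L+\lambda_{i}}\otimes V_{L+\lambda_{i}}$; and the $4k$ modules $W_{\widehat{(i\ \epsilon)}}$ obtained by splitting each of the $2k$ irreducible $\sigma$-twisted $V_{L}\otimes V_{L}$-modules, whose count is pinned down by \cite{BDM} as equal to the number of irreducible $V_{L}$-modules, into two irreducible $\mathcal{U}$-modules via \cite{DY}.

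Next, I would verify these $2k^{2}+7k$ modules are pairwise inequivalent. Using the identification (\ref{Realizaiton of the orbifold}), together with the fact that the permutation $\sigma$ acts trivially on $V_{\mathbb{Z}\beta_{1}}$ and as the involution $\theta$ on $V_{\mathbb{Z}\beta_{2}}$, each candidate module can be matched with one of the explicit sums $\mathcal{U}\cdot W$ listed in Proposition \ref{all modules}. Since $\mathcal{U}$ is a simple current extension of $V_{\mathbb{Z}\beta}\otimes V_{\mathbb{Z}\beta}^{+}$, the preceding uniqueness-of-extension proposition implies that distinct irreducible $V_{\mathbb{Z}\beta}\otimes V_{\mathbb{Z}\beta}^{+}$-modules $W$ produce non-isomorphic induced modules $\mathcal{U}\cdot W$ whenever the induction is irreducible. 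This establishes the pairwise non-isomorphism of the constructed list.

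Combining the lower bound $\geq 2k^{2}+7k$ with the upper bound from Proposition \ref{all modules} forces equality, and the theorem follows. The main technical obstacle lies in the matching step: one must track carefully, using the fusion rules of \cite{DL1,A1} and the parity conditions on $k+i$ that distinguish the $T_{1}$- and $T_{2}$-twisted sectors, how the three families $W_{(i\ j)}$, $W_{\widetilde{(i\ \epsilon)}}$, $W_{\widehat{(i\ \epsilon)}}$ correspond bijectively to the three families appearing in Proposition \ref{all modules} under the change of coordinates from $L\oplus L$ to $\mathbb{Z}\beta_{1}\oplus\mathbb{Z}\beta_{2}$.
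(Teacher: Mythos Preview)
Your proposal is correct and follows essentially the same route as the paper: the theorem is obtained by combining Proposition \ref{all modules} (the upper bound of $2k^{2}+7k$) with the uniqueness-of-extension proposition, together with the explicit list of $2k^{2}+7k$ irreducible modules built from the untwisted and $\sigma$-twisted sectors. One small clarification: the uniqueness proposition is really needed on the \emph{upper}-bound side (to guarantee that each $V_{\mathbb{Z}\beta}\otimes V_{\mathbb{Z}\beta}^{+}$-module structure listed in Proposition \ref{all modules} supports at most one $\mathcal{U}$-module structure), whereas the pairwise inequivalence of the constructed modules follows already from the fact that their $V_{\mathbb{Z}\beta}\otimes V_{\mathbb{Z}\beta}^{+}$-summands are distinct.
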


\section{The quantum dimensions }

The notions and properties of quantum dimensions have been systematically
studied in \cite{DJX}. For a rational, $C_{2}$-cofinite, self-dual
vertex operator algebra of CFT type, quantum dimensions of its irreducible
modules have nice properties. It turns out that these properties are
very helpful in determining fusion rules. Now we recall some notions
and properties about quantum dimensions.

\begin{definition} Let $g$ be an automorphism of the vertex operator
algebra $V$ with order $T$. Let $M=\oplus_{n\in\frac{1}{T}\mathbb{Z}_{+}}M_{\lambda+n}$
be a $g$-twisted $V$-module, the formal character of $M$ is defined
as

\[
\mbox{ch}_{q}M=\mbox{tr}_{M}q^{L\left(0\right)-c/24}=q^{\lambda-c/24}\sum_{n\in\frac{1}{T}\mathbb{Z}_{+}}\left(\dim M_{\lambda+n}\right)q^{n},
\]
where $\lambda$ is the conformal weight of $M$. \end{definition}

We denote the holomorphic function $\mbox{ch}_{q}M$ by $Z_{M}\left(\tau\right)$.
Here and below, $\tau$ is in the upper half plane $\mathbb{H}$ and
$q=e^{2\pi i\tau}$.

\begin{definition} \label{quantum dimension}Let $V$ be a vertex
operator algebra and $M$ a $g$-twisted $V$-module such that $Z_{V}\left(\tau\right)$
and $Z_{M}\left(\tau\right)$ exists. The quantum dimension of $M$
over $V$ is defined as
\[
\mbox{qdim}_{V}M=\lim_{y\to0}\frac{Z_{M}\left(iy\right)}{Z_{V}\left(iy\right)},
\]
where $y$ is real and positive. \end{definition}

From now on, we assume $V$ is a rational, $C_{2}$-cofinite vertex
operator algebra of CFT type with $V\cong V'$. Let $M^{0}\cong V,\, M^{1},\,\cdots,\, M^{d}$
denote all inequivalent irreducible $V$-modules. Moreover, we assume
the conformal weights $\lambda_{i}$ of $M^{i}$ are positive for
all $i>0.$ Then we have the following properties of quantum dimensions
\cite{DJX}:

\begin{proposition} \label{possible values of quantum dimensions}
$q\dim_{V}M^{i}\geq1,$ $\forall i=0,\cdots,d.$

\end{proposition}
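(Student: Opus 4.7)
The plan is to reduce the inequality to a statement about the Perron--Frobenius eigenvalue of the fusion matrices by computing the limit in Definition \ref{quantum dimension} explicitly via modular invariance. Since $V$ is rational, $C_2$-cofinite, of CFT type, and self-dual, Zhu's modular invariance theorem applies to the space spanned by $Z_{M^0}(\tau),\ldots,Z_{M^d}(\tau)$, giving a representation of $SL_2(\mathbb{Z})$. In particular there is an $S$-matrix $(S_{ij})$ with
\[
Z_{M^i}(-1/\tau) \;=\; \sum_{j=0}^d S_{ij}\, Z_{M^j}(\tau).
\]

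Substituting $\tau=i/y$ with $y>0$ small and writing $Z_{M^j}(i/y)=e^{2\pi(c/24-\lambda_j)/y}\bigl(\dim M^j_{\lambda_j}+O(e^{-2\pi/y})\bigr)$, the hypothesis $\lambda_j>0$ for $j>0$ shows that every non-vacuum exponential decays relative to the vacuum exponential as $y\to 0^+$. Hence
\[
Z_{M^i}(iy) \;\sim\; S_{i0}\, e^{2\pi c/(24 y)} \qquad (y\to 0^+),
\]
and the same asymptotics with $i=0$ yields $\mathrm{qdim}_V M^i = S_{i0}/S_{00}$. Setting $d_i := S_{i0}/S_{00}$, the Verlinde formula (which follows from the modular tensor category structure on $V\text{-mod}$) gives $d_id_j=\sum_k N_{ij}^k d_k$ with $d_0=1$, and the $N_i$ are non-negative integer matrices. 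Thus $(d_j)_j$ is a common Perron--Frobenius eigenvector of the commuting family $\{N_i\}$ with eigenvalue $d_i$ for $N_i$; since the $(i',0)$-entry of $N_i$ equals $N_{i,i'}^{\,0}=\delta_{i',i}$ (using $V\cong V'$ and $M^i\boxtimes (M^i)'\supset V$), one gets $d_i\geq 1$ as the PF-eigenvalue of a non-negative integer matrix whose spectral radius is bounded below by $1$.

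The main obstacle is the justification of the termwise asymptotic expansion: one must interchange the finite sum over $j$ with $\lim_{y\to 0}$, which in turn requires uniform control on the error terms $O(e^{-2\pi/y})$ in each $Z_{M^j}(i/y)$. This is where $C_2$-cofiniteness enters essentially, because it guarantees that the Fourier coefficients $\dim M^j_{\lambda_j+n}$ grow at most polynomially (in fact sub-exponentially), so the error term is genuinely exponentially small and can be absorbed without affecting the leading behavior. Once this analytic point is handled, the combinatorial step (identifying $S_{i0}/S_{00}$ with the PF-eigenvalue and bounding it below by $1$) is standard and the bound in the proposition follows.
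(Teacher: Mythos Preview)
The paper does not prove this proposition; it is quoted from \cite{DJX} along with Propositions \ref{quantum-product} and \ref{simple current quantum dim}. Your outline is essentially the argument in \cite{DJX}: use the $S$-transform of the characters to identify $q\dim_V M^i$ with $S_{i0}/S_{00}$, then invoke Verlinde to see that the vector $(d_j)_j$ is a common positive eigenvector of the fusion matrices $N_i$ with eigenvalue $d_i$.

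Your last step, however, has a gap. A single off-diagonal entry equal to $1$ in a non-negative integer matrix does not bound its spectral radius below by $1$: the nilpotent matrix $\left(\begin{smallmatrix}0&1\\0&0\end{smallmatrix}\right)$ has spectral radius $0$. Also, $N_{i,i'}^{\,0}=\delta_{i',i^{*}}$ (where $M^{i^{*}}=(M^i)'$), not $\delta_{i',i}$; nothing in the hypotheses forces $M^i$ itself to be self-dual. The correct argument, as in \cite{DJX}, uses that contragredient modules have identical graded characters, so $d_{i^{*}}=d_i$, and then
\[
d_i^{\,2}=d_id_{i^{*}}=\sum_{k}N_{i,i^{*}}^{\,k}d_k\ \ge\ N_{i,i^{*}}^{\,0}\,d_0=1,
\]
whence $d_i\ge 1$. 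One smaller remark: $C_2$-cofiniteness does not give polynomial growth of $\dim M^j_{\lambda_j+n}$ (for a rational $C_2$-cofinite VOA these typically grow like $e^{c\sqrt{n}}$); what it gives, together with rationality, is convergence of the characters to holomorphic functions on the upper half-plane, and that alone is what makes the $S$-transformation and the leading-term asymptotics legitimate.
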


\begin{proposition} \label{quantum-product} For any $i,\, j=0,\cdots,\, d,$
\[
q\dim_{V}\left(M^{i}\boxtimes M^{j}\right)=q\dim_{V}M^{i}\cdot q\dim_{V}M^{j}.
\]
\end{proposition}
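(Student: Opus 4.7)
The plan is to express the quantum dimension via the modular $S$-matrix and then invoke the Verlinde formula. Under the standing assumptions (rational, $C_{2}$-cofinite, CFT type, $V\cong V'$), Zhu's theorem supplies an action of $SL_{2}(\mathbb{Z})$ on the span of the characters $Z_{M^{i}}(\tau)$, and Huang's theorem supplies the Verlinde formula $N_{ij}^{k}=\sum_{m}S_{im}S_{jm}\overline{S_{km}}/S_{0m}$ together with the unitarity of $S$. As a preliminary step, I would first show that $q\dim_{V}M^{i}=S_{i0}/S_{00}$: writing $Z_{M^{i}}(-1/\tau)=\sum_{j}S_{ij}Z_{M^{j}}(\tau)$ and specializing to $\tau=iy$, the limit $y\to 0^{+}$ pushes $-1/\tau$ to $i\infty$, so each $Z_{M^{j}}(i/y)$ is dominated by $q^{\lambda_{j}-c/24}$ with $q=e^{-2\pi/y}\to 0$; the positivity hypothesis $\lambda_{j}>0$ for $j>0$ together with $\lambda_{0}=0$ forces the ratio $Z_{M^{i}}(iy)/Z_{V}(iy)$ to converge to $S_{i0}/S_{00}$.

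With this formula in hand, the multiplicativity becomes a short calculation. Expanding the fusion product as $M^{i}\boxtimes M^{j}=\sum_{k}N_{ij}^{k}M^{k}$ and applying the preliminary step gives
\[
q\dim_{V}(M^{i}\boxtimes M^{j})=\sum_{k}N_{ij}^{k}\,\frac{S_{k0}}{S_{00}}.
\]
Substituting the Verlinde formula for $N_{ij}^{k}$ and interchanging the order of summation recasts this as $\frac{1}{S_{00}}\sum_{m}\frac{S_{im}S_{jm}}{S_{0m}}\sum_{k}S_{k0}\overline{S_{km}}$. The inner $k$-sum equals $\delta_{0m}$ by unitarity of $S$, so only the $m=0$ term survives and one is left with $S_{i0}S_{j0}/S_{00}^{2}=q\dim_{V}M^{i}\cdot q\dim_{V}M^{j}$.

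The main obstacle is the rigorous justification of the limit in the preliminary step: one must confirm that after $S$-transformation the finitely many characters admit controlled asymptotics at $\tau=i/y$ as $y\to 0^{+}$ (which follows from the positivity of the non-vacuum conformal weights together with the finite-dimensionality of weight spaces guaranteed by $C_{2}$-cofiniteness), and that only the vacuum contribution survives in the limiting ratio. Both points are standard in the theory of quantum dimensions developed in \cite{DJX}, so the proposition ultimately reduces to assembling those inputs together with the Verlinde formula.
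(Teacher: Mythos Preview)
The paper does not supply its own proof of this proposition: it is quoted (together with the neighboring Propositions on the range of quantum dimensions and the simple-current criterion) from \cite{DJX} as background, so there is no in-paper argument to compare against. Your sketch is essentially the standard proof given in \cite{DJX}: one identifies $q\dim_{V}M^{i}$ with $S_{i0}/S_{00}$ via the asymptotics of the $S$-transformed characters, and then the Verlinde formula plus unitarity of $S$ immediately yield multiplicativity. The asymptotic step is justified exactly as you indicate, using $\lambda_{j}>0$ for $j>0$; this is the point where the standing hypotheses on $V$ are used.
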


\begin{proposition} \label{simple current quantum dim}A $V$-module
$M$ is a simple current if and only if $q\dim_{V}M=1$.\end{proposition}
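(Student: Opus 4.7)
The plan is to prove the two directions separately, using multiplicativity of quantum dimension under fusion (Proposition \ref{quantum-product}) and the lower bound $\mathrm{qdim}_V M^i \ge 1$ (Proposition \ref{possible values of quantum dimensions}) as the two main levers, together with the fact that $\mathrm{qdim}_V V = 1$ (which is immediate from Definition \ref{quantum dimension}).

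For the forward direction, assume $M$ is a simple current. First I would argue that $M \boxtimes M' \cong V$. Indeed, since $M$ is a simple current, $M \boxtimes M'$ is an irreducible $V$-module; on the other hand, by the symmetry of fusion rules (Proposition \ref{fusion rule symmmetry property}) one has $N_{M,M'}^{V} = N_{M,V'}^{M} = N_{M,V}^{M} = 1$, so $V$ appears as a summand of $M \boxtimes M'$, forcing equality. Applying Proposition \ref{quantum-product} gives $\mathrm{qdim}_V M \cdot \mathrm{qdim}_V M' = \mathrm{qdim}_V V = 1$, and since both factors are at least $1$ by Proposition \ref{possible values of quantum dimensions}, both must equal $1$. (One could also invoke directly that $\mathrm{qdim}_V M = \mathrm{qdim}_V M'$, which follows from the definition via contragredient characters.)

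For the converse, assume $\mathrm{qdim}_V M = 1$. I would first show $M \boxtimes M' = V$. Decompose $M \boxtimes M' = \sum_i n_i M^i$ into irreducibles; Proposition \ref{quantum-product} gives $\sum_i n_i\,\mathrm{qdim}_V M^i = 1$, and combined with $\mathrm{qdim}_V M^i \ge 1$ this forces the decomposition to be a single irreducible with multiplicity one. Since $V$ appears by the fusion-symmetry argument above, $M \boxtimes M' = V$. Now let $W$ be any irreducible $V$-module and write $M \boxtimes W = \sum_i N_i W^i$. Fusing with $M'$ on the left, associativity of the fusion product (valid since $V$ is rational) yields
\[
W \;=\; V \boxtimes W \;=\; M' \boxtimes (M \boxtimes W) \;=\; \sum_i N_i\,(M' \boxtimes W^i).
\]
Each $M' \boxtimes W^i$ is a nonzero $V$-module by Proposition \ref{quantum-product}, so taking quantum dimensions and using $\mathrm{qdim}_V M' = 1$ gives $\sum_i N_i\,\mathrm{qdim}_V W^i = \mathrm{qdim}_V W$, while comparing irreducible decompositions on both sides of the displayed equation forces exactly one $N_i$ to be nonzero and equal to $1$, with the corresponding $M' \boxtimes W^i = W$. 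Hence $M \boxtimes W = W^i$ is irreducible, proving $M$ is a simple current.

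The principal obstacle is the converse direction, and within it the step $M \boxtimes M' = V$: one needs both the lower bound on quantum dimensions to rule out multiple summands and the fusion-rule symmetry to guarantee that $V$ actually appears. Once $M \boxtimes M' = V$ is in hand, the remaining argument that $M$ acts invertibly on the fusion ring is essentially formal, relying only on associativity of the fusion product for rational $V$ and the nonvanishing lower bound $\mathrm{qdim}_V M^i \ge 1$.
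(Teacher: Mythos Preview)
The paper does not prove this proposition; it is quoted without proof from \cite{DJX}, together with Propositions \ref{possible values of quantum dimensions} and \ref{quantum-product}. Your argument is correct and is essentially the standard one given there: multiplicativity of quantum dimension plus the lower bound $\mathrm{qdim}_V M^i\ge 1$ force $M\boxtimes M'=V$, and then associativity of the fusion product (available under the standing hypotheses on $V$) lets you invert $M$ on the fusion ring.

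One small point worth making explicit: in the converse direction, when you write $\sum_i n_i\,\mathrm{qdim}_V M^i = 1$ you are tacitly using $\mathrm{qdim}_V M'=1$. This follows from $\mathrm{qdim}_V M=\mathrm{qdim}_V M'$ (the graded pieces of $M$ and $M'$ have the same dimensions), a fact you noted parenthetically in the forward direction; just invoke it here as well. With that in place, the rest of your proof goes through exactly as written.
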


\begin{remark} By Proposition \ref{Rationality} and \cite{A4} we
see that the vertex operator algebra $\left(V_{L}\otimes V_{L}\right)^{\mathbb{Z}_{2}}$
satisfies all the assumptions and thus we can apply these properties.\end{remark}

By applying properties of irreducible $\left(V_{L}\otimes V_{L}\right)^{\mathbb{Z}_{2}}$-modules,
we obtain quantum dimensions of all irreducible $\left(V_{L}\otimes V_{L}\right)^{\mathbb{Z}_{2}}$-modules
as follows:

\begin{proposition}For $0\le i,j\le2k-1$,$i>j$, $\epsilon=0,1$,
\[
q\dim\left(ij\right)=2,
\]
\[
q\dim\left(\widetilde{i\ \epsilon}\right)=1,
\]
\[
q\dim\left(\widehat{i\ \epsilon}\right)=\sqrt{2k}.
\]

\end{proposition}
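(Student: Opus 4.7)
The plan is to exploit the realization (\ref{Realizaiton of the orbifold}) of $\mathcal{U}$ as a $\mathbb{Z}_{2}$-graded simple current extension of $V := V_{\mathbb{Z}\beta} \otimes V_{\mathbb{Z}\beta}^{+}$ and push every computation down to quantum dimensions over $V$. Directly from Definition \ref{quantum dimension}, for any $\mathcal{U}$-module $M$ regarded as a $V$-module,
\[
q\dim_{\mathcal{U}} M \;=\; \lim_{y\to 0^{+}}\frac{Z_{M}(iy)/Z_{V}(iy)}{Z_{\mathcal{U}}(iy)/Z_{V}(iy)} \;=\; \frac{q\dim_{V} M}{q\dim_{V}\mathcal{U}},
\]
and taking $M = \mathcal{U}$ gives $q\dim_{V}\mathcal{U} = 1 + q\dim_{V}\bigl(V_{\frac{1}{2}\beta+\mathbb{Z}\beta}\otimes V_{\frac{1}{2}\beta+\mathbb{Z}\beta}^{+}\bigr) = 1 + 1\cdot 1 = 2$, as both tensor factors are simple currents over $V_{\mathbb{Z}\beta}$ and $V_{\mathbb{Z}\beta}^{+}$ respectively (Proposition \ref{simple current quantum dim}).

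The numerator $q\dim_{V} M$ is then evaluated on each irreducible $\mathcal{U}$-module $M$ summand-by-summand using the explicit decompositions already supplied by Proposition \ref{all modules}, together with multiplicativity of quantum dimensions across tensor products of rational $C_{2}$-cofinite vertex operator algebras. The required inputs are: every irreducible $V_{\mathbb{Z}\beta}$-module has quantum dimension $1$, and from the $V_{L}^{+}$ quantum dimension calculation in \cite{DJX}, for $L = \mathbb{Z}\beta$ with $\langle\beta,\beta\rangle = 4k$ the irreducible $V_{\mathbb{Z}\beta}^{+}$-modules satisfy $q\dim V_{\mathbb{Z}\beta}^{\pm} = q\dim V_{\frac{1}{2}\beta+\mathbb{Z}\beta}^{\pm} = 1$, $q\dim V_{\mathbb{Z}\beta+\frac{j}{4k}\beta} = 2$ for $0 < j < 2k$, and $q\dim(V_{\mathbb{Z}\beta}^{T_{s}})^{\pm} = \sqrt{2k}$.

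Case-by-case: for $(i\ j)$ with $0 \le j < i \le 2k-1$, the two second tensor factors land in the qdim-$2$ range since $0 < i - j < 2k$ and $0 < 2k - i + j < 2k$, so $q\dim_{V}(i\ j) = 1\cdot 2 + 1\cdot 2 = 4$; for $(\widetilde{i\ \epsilon})$ the second factors are $\pm$-pieces of $V_{\mathbb{Z}\beta}^{+}$ and $V_{\frac{1}{2}\beta+\mathbb{Z}\beta}^{+}$, contributing $1 + 1 = 2$; for $(\widehat{i\ \epsilon})$ both twisted pieces contribute $\sqrt{2k}$, summing to $2\sqrt{2k}$. Dividing each total by $q\dim_{V}\mathcal{U} = 2$ yields the stated values $2$, $1$, and $\sqrt{2k}$. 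No serious analytic obstacle appears; the one point to handle carefully is matching each second tensor factor in Proposition \ref{all modules} to its correct $V_{\mathbb{Z}\beta}^{+}$-module type (non-split irreducible, $\pm$-piece, or twisted $\pm$-piece) so that the appropriate quantum dimension is applied.
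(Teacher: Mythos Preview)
Your proof is correct and follows essentially the same route as the paper: reduce quantum dimensions over $\mathcal{U}$ to those over $V_{\mathbb{Z}\beta}\otimes V_{\mathbb{Z}\beta}^{+}$ via the ratio formula $q\dim_{\mathcal{U}}M=\tfrac{1}{2}\,q\dim_{V_{\mathbb{Z}\beta}\otimes V_{\mathbb{Z}\beta}^{+}}M$, then evaluate each summand in Proposition~\ref{all modules} using the known quantum dimensions of irreducible $V_{\mathbb{Z}\beta}$- and $V_{\mathbb{Z}\beta}^{+}$-modules. The only cosmetic difference is that the paper derives $q\dim_{V_{\mathbb{Z}\beta}^{+}}(V_{\mathbb{Z}\beta}^{T_{s}})^{\pm}=\sqrt{2k}$ in-line from the fusion product $V_{\mathbb{Z}\beta}^{T_{2},-}\boxtimes_{V_{\mathbb{Z}\beta}^{+}}V_{\mathbb{Z}\beta}^{T_{2},-}$ and Proposition~\ref{quantum-product}, whereas you import this value from \cite{DJX}.
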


\begin{proof} First we notice that by fusion rules of irreducible
modules of $V_{\mathbb{Z}\beta}$ and $V_{\mathbb{Z}\beta}^{+}$,
$V_{\frac{1}{2}\beta+\mathbb{Z}\beta}\otimes V_{\frac{1}{2}\beta+\mathbb{Z}\beta}^{+}$
is a simple current. Thus $q\dim_{V_{\mathbb{Z}\beta}\otimes V_{\mathbb{Z}\beta}^{+}}\mathcal{U}=2$.
By definition of quantum dimension, we see that for each $\mathcal{U}$-module
$W$,
\[
q\dim_{\mathcal{U}}W=\frac{q\dim_{V_{\mathbb{Z}\beta}\otimes V_{\mathbb{Z}\beta}^{+}}W}{q\dim_{V_{\mathbb{Z}\beta}\otimes V_{\mathbb{Z}\beta}^{+}}\mathcal{U}}=\frac{1}{2}q\dim_{V_{\mathbb{Z}\beta}\otimes V_{\mathbb{Z}\beta}^{+}}W.
\]
Thus it suffices to find quantum dimensions of these modules as $V_{\mathbb{Z}\beta}\otimes V_{\mathbb{Z}\beta}^{+}$-
modules. By fusion rules of irreducible $V_{\mathbb{Z}\beta}$-modules,
we see that $V_{\mathbb{Z}\beta+\frac{i}{4k}\beta}$ is a simple current.
By Proposition \ref{simple current quantum dim}, $q\dim_{V_{\mathbb{Z}\beta}}V_{\mathbb{Z}\beta+\frac{i}{4k}\beta}=1$.
By fusion rules of irreducible modules, both $V_{\mathbb{Z}\beta}^{+}$
and $V_{\mathbb{Z}\beta}^{-}$ are simple currents of $V_{\mathbb{Z}\beta}^{+}$,
therefore $q\dim_{V_{\mathbb{Z}\beta}^{+}}V_{\mathbb{Z}\beta}^{+}=q\dim_{V_{\mathbb{Z}\beta}^{+}}V_{\mathbb{Z}\beta}^{-}=1$.
Since $V_{\mathbb{Z}\beta}=V_{\mathbb{Z}\beta}^{+}+V_{\mathbb{Z}\beta}^{-}$,
we have $q\dim_{V_{\mathbb{Z}\beta}^{+}}V_{\mathbb{Z}\beta}=2$. Similarly
we get $q\dim_{V_{\mathbb{Z}\beta}^{+}}V_{\mathbb{Z}\beta+\frac{i}{4k}\beta}=2$,
$i\in\left\{ 1,\cdots,2k-1\right\} $. By Proposition \ref{quantum-product},
we obtain
\[
q\dim_{V_{\mathbb{Z}\beta}\otimes V_{\mathbb{Z}\beta}^{+}}\left(i\ j\right)=4
\]
and therefore $q\dim_{\mathcal{U}}\left(ij\right)=2$, $0\le i,j\le2k-1$,
$i>j$.

By similar argument, it is easy to see that $q\dim\left(\widetilde{i\ \epsilon}\right)=1$ for
$i=0,1,\cdots,2k-1$, $\epsilon=0,1$.

To obtain the quantum dimensions of $\widehat{\left(i\ \epsilon\right)}$,
we first observe that by fusion rules of irreducible $V_{\mathbb{Z}\beta}^{+}$-modules
we have the fusion product:
\[
V_{\mathbb{Z}\beta}^{T_{2},-}\boxtimes_{V_{\mathbb{Z}\beta}^{+}}V_{\mathbb{Z}\beta}^{T_{2},-}=V_{\mathbb{Z}\beta}^{+}+V_{\frac{\beta}{2}+\mathbb{Z}\beta}^{-}+\sum_{1\le r\le2k-1,r\ \mbox{even}}V_{\frac{r}{4k}+\mathbb{Z}\beta}.
\]
Since $q\dim_{V_{\mathbb{Z}\beta}^{+}}V_{\frac{r}{4k}+\mathbb{Z}\beta}=2,$
we see that the quantum dimension of the right hand side is $2k$ and
\[
q\dim\left(V_{\mathbb{Z}\beta}^{T_{2},-}\boxtimes_{V_{\mathbb{Z}\beta}^{+}}V_{\mathbb{Z}\beta}^{T_{2},-}\right)=2k.
\]
By Proposition \ref{quantum-product}, we obtain $q\dim_{V_{\mathbb{Z}\beta}^{+}}V_{\mathbb{Z}\beta}^{T_{2},-}=\sqrt{2k}$.
Similarly, we can prove
\[
q\dim_{V_{\mathbb{Z}\beta}^{+}}V_{\mathbb{Z}\beta}^{T_{1},+}=q\dim_{V_{\mathbb{Z}\beta}^{+}}V_{\mathbb{Z}\beta}^{T_{1},-}=q\dim_{V_{\mathbb{Z}\beta}^{+}}V_{\mathbb{Z}\beta}^{T_{2},+}=q\dim_{V_{\mathbb{Z}\beta}^{+}}V_{\mathbb{Z}\beta}^{T_{2},-}=\sqrt{2k}.
\]
Thus $q\dim_{V_{\mathbb{Z}\beta}\otimes V_{\mathbb{Z}\beta}^{+}}\widehat{\left(i\ \epsilon\right)}=2\sqrt{2k}$
and $q\dim_{\mathcal{U}}\widehat{\left(i\ \epsilon\right)}=\sqrt{2k}$
, $i=0,1,\cdots,2k$, $\epsilon=0,1$. \end{proof}

\section{Fusion Rules}

In this section, we apply the quantum dimensions obtained in the
previous section and the fusion rules of irreducible $V_{\mathbb{Z}\beta}$-
and $V_{\mathbb{Z}\beta}^{+}$-modules in \cite{DL1} and \cite{A2}  to determine the fusion products of
the cyclic permutation orbifold model.

\begin{theorem} For any $i>j$, $0\le i,j,l\le2k-1$, $\epsilon\in\left\{ 0,1\right\} $,
the following are the fusion products of irreducible modules of $\left(V_{L}\otimes V_{L}\right)^{\mathbb{Z}_{2}}$
:

\begin{equation}
\left(i\ j\right)\boxtimes\widehat{\left(l\ \epsilon\right)}=\widehat{\left(i+j+l\ 0\right)}+\widehat{\left(i+j+l\ 1\right)}\label{eq:non-diagonal-twisted}
\end{equation}
where $i+j+l$ is understood to be its remainder in $\{0,...,2k-1\}$ when divided by $2k,$

\begin{equation}
\left(i\ j\right)\boxtimes\widetilde{\left(l\ \epsilon\right)}=\left(i+l\ j+l\right),\label{non-diagonal-diagonal}
\end{equation}

\begin{equation}
\widetilde{\left(i\ \epsilon\right)}\boxtimes\widetilde{\left(j\ \epsilon_{1}\right)}=\widetilde{\left(i+j\ \epsilon+\epsilon_{1}\right)}\label{diagonal-diagonal}
\end{equation}
where $\epsilon+\epsilon_{1}$ is understood to be its remainder in $\{0,1\}$ when divided by $2,$

\begin{equation}
\widetilde{\left(i\ \epsilon\right)}\boxtimes\widehat{\left(j\ \epsilon_{1}\right)}=\widehat{\left(2i+j\ \epsilon+\epsilon_{1}\right)},\label{diagonal-twisted}
\end{equation}

\begin{equation}
\left(i\ j\right)\boxtimes\left(p\ q\right)=\widetilde{\left(p+j\ 0\right)}+\widetilde{\left(p+j\ 1\right)}+\widetilde{\left(p-i\ 0\right)}+\widetilde{\left(p-i\ 1\right)}\label{eq:nondiagonal-nondiagonal}
\end{equation}
if $i-j=p-q,$

\[
\left(i\ j\right)\boxtimes\left(p\ q\right)=\left(i+p\ j+q\right)+\left(i+q\ j-p\right)
\]
if $i-j\pm\left(p-q\right)\not=0,2k$,

\begin{equation}
\widehat{\left(i\ \epsilon\right)}\boxtimes\widehat{\left(j\ \epsilon_{1}\right)}=\widetilde{\left(\frac{i+j}{2}\ \epsilon+\epsilon_{1}\right)}+\widetilde{\left(k+\frac{i+j}{2}\ \epsilon+\epsilon_{1}+1\right)}+\sum_{1\le r\le2k-1,r\ \mbox{even}}\left(i+r\ j-r\right)\label{twisted-twisted:one odd two even}
\end{equation}
if $k+i$ and $k+j$ are both odd,

\begin{equation}
\widehat{\left(i\ \epsilon\right)}\boxtimes\widehat{\left(j\ \epsilon_{1}\right)}=\widetilde{\left(\frac{i+j}{2}\ \epsilon+\epsilon_{1}\right)}+\widetilde{\left(k+\frac{i+j}{2}\ \epsilon+\epsilon_{1}\right)}+\sum_{1\le r\le2k-1,r\ \mbox{even}}\left(i+r\ j-r\right)\label{twisted-twisted:all odd or all even}
\end{equation}
if $k+i$ and $k+j$ are both even, and
\begin{equation}
\widehat{\left(i\ \epsilon\right)}\boxtimes\widehat{\left(j\ \epsilon_{1}\right)}=\sum_{1\le r\le2k-1,r\ \mbox{odd }}\left(i+r\ j-r\right)\label{twisted-twisted:one odd one even}
\end{equation}
if one of  $k+i$ and $k+j$ is even and the other is odd.
\end{theorem}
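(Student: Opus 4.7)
The plan is to reduce every fusion product in $\mathcal{U}$ to a computation inside the subalgebra $V^0 := V_{\mathbb{Z}\beta}\otimes V_{\mathbb{Z}\beta}^+$, exploiting the simple current extension $\mathcal{U} = V^0 \oplus V^1$ with $V^1 = V_{\frac{1}{2}\beta+\mathbb{Z}\beta}\otimes V_{\frac{1}{2}\beta+\mathbb{Z}\beta}^+$ established in Section 4. The central input is Proposition \ref{all modules}, which writes every irreducible $\mathcal{U}$-module as an explicit direct sum of one or two irreducible $V^0$-modules. Combined with the fusion rules of $V_{\mathbb{Z}\beta}$ from \cite{DL1} (coset addition in $L^{\circ}/L$) and of $V_{\mathbb{Z}\beta}^+$ from \cite{A1}, this reduces every identity in the theorem to a combinatorial bookkeeping.

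For each identity I would proceed in three steps. First, decompose both $M_1$ and $M_2$ as $V^0$-modules via Proposition \ref{all modules} and expand $M_1 \boxtimes_{V^0} M_2$ summand by summand using the $V^0$-fusion rules. Second, regroup the resulting $V^0$-modules into irreducible $\mathcal{U}$-modules by inverting Proposition \ref{all modules}; any $\mathcal{U}$-intertwining operator restricts on each $V^0$-summand to a $V^0$-intertwining operator, giving an upper bound on the $\mathcal{U}$-fusion rules, and matching lower bounds follow from the fact that $\mathcal{U}$ is a simple current extension, so that the appropriate $V^0$-intertwining operators lift through the $V^1$-action. Third, pin down the multiplicities by the multiplicativity $q\dim_\mathcal{U}(M_1\boxtimes M_2) = q\dim_\mathcal{U}(M_1)\cdot q\dim_\mathcal{U}(M_2)$ of Proposition \ref{quantum-product}, using the values $q\dim(ij)=2$, $q\dim\widetilde{(i\ \epsilon)}=1$, $q\dim\widehat{(i\ \epsilon)}=\sqrt{2k}$ from Section 6. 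For example in (\ref{eq:non-diagonal-twisted}) both sides have quantum dimension $2\sqrt{2k}$, in (\ref{non-diagonal-diagonal}) both sides have quantum dimension $2$, and in (\ref{eq:nondiagonal-nondiagonal}) both sides have quantum dimension $4$; the rules (\ref{non-diagonal-diagonal})--(\ref{diagonal-twisted}) and (\ref{eq:nondiagonal-nondiagonal}) fall out in a few lines each. The symmetry of fusion rules in Proposition \ref{fusion rule symmmetry property} can be used throughout to reduce the number of cases.

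The main obstacle will be the twisted--twisted case (\ref{twisted-twisted:one odd two even})--(\ref{twisted-twisted:one odd one even}). Here one must expand $(V_{\mathbb{Z}\beta}^{T_s})^\pm\boxtimes_{V_{\mathbb{Z}\beta}^+}(V_{\mathbb{Z}\beta}^{T_t})^\pm$ via \cite{A1}; as illustrated by the identity for $V_{\mathbb{Z}\beta}^{T_2,-}\boxtimes V_{\mathbb{Z}\beta}^{T_2,-}$ recalled in the proof of the quantum dimensions, the result is a sum of untwisted modules $V_{\frac{r}{4k}+\mathbb{Z}\beta}$ together with $V_{\mathbb{Z}\beta}^\pm$ and $V_{\frac{\beta}{2}+\mathbb{Z}\beta}^\pm$ summands, where the parity of $r$ and the $\pm$-eigenvalues depend on $s,t$ and the chosen signs. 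Matching these parities against the constraints on $k+i$ and $k+j$ built into Proposition \ref{all modules} produces the three-case split: when $k+i$ and $k+j$ share parity the fused twist types agree and the even-$r$ expansion survives, producing both $\widetilde{(\cdot\ \cdot)}$ diagonal terms and the sum over even $r$ in (\ref{twisted-twisted:one odd two even}) and (\ref{twisted-twisted:all odd or all even}); when $k+i$ and $k+j$ have opposite parity the twist types differ and the expansion keeps only odd $r$, giving (\ref{twisted-twisted:one odd one even}) with no diagonal contribution. Carefully tracking the $\theta$-eigenvalues of the $V_{\mathbb{Z}\beta}^\pm$ summands after reorganization through $V^1$-twists then distinguishes $\widetilde{(\cdot\ \epsilon+\epsilon_1)}$ from $\widetilde{(\cdot\ \epsilon+\epsilon_1+1)}$ in the first two cases, with the quantum-dimension check (left side $2k$, right side $2+2(k-1)=2k$ in the mixed cases and $2k\cdot 1 = 2k$ in the pure-odd case) serving as the final consistency verification.
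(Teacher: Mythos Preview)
Your proposal is correct and follows essentially the same approach as the paper: reduce to the $V_{\mathbb{Z}\beta}\otimes V_{\mathbb{Z}\beta}^{+}$-fusion rules via Proposition~\ref{all modules}, identify the candidate summands, and use the quantum-dimension multiplicativity from Proposition~\ref{quantum-product} to certify that the list is complete with multiplicity one. The paper's write-up is slightly more economical than yours: rather than computing the entire $V^{0}$-fusion product $M_{1}\boxtimes_{V^{0}}M_{2}$ and then regrouping, it directly exhibits enough nonzero $V^{0}$-intertwining operators to witness $I_{\mathcal{U}}\binom{W}{M_{1}\ M_{2}}\neq 0$ for each target $W$, and then lets the quantum-dimension count close the argument (so quantum dimension serves as the \emph{upper} bound on fusion coefficients, not merely a consistency check). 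Your more systematic bookkeeping of the full $V^{0}$-fusion expansion would work equally well and is perhaps more transparent, at the cost of a little more computation; in particular your description of how the parity of $r$ in the $V_{\mathbb{Z}\beta}^{T_{s},\pm}\boxtimes V_{\mathbb{Z}\beta}^{T_{t},\pm}$ expansion interacts with the $k+i$, $k+j$ parity constraints of Proposition~\ref{all modules} to produce the three-way split (\ref{twisted-twisted:one odd two even})--(\ref{twisted-twisted:one odd one even}) is exactly right. One small slip: in your final dimension check for (\ref{twisted-twisted:one odd one even}) the right-hand side has $k$ terms of quantum dimension $2$, giving $2k$, not ``$2k\cdot 1$''.
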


\begin{proof}\emph{Proof of (\ref{eq:non-diagonal-twisted}), (\ref{non-diagonal-diagonal}),
(\ref{diagonal-diagonal}) and (\ref{diagonal-twisted}):} For $0\le i,j\le2k-1$
with $i>j$, first by fusion rules of irreducible $V_{\mathbb{Z}\beta}$-
and $V_{\mathbb{Z}\beta}^{+}$- modules, we get $I_{\mathcal{U}}\left(_{\left(ij\right)\ \widehat{\left(l\ \epsilon\right)}}^{\widehat{\left(i+j+l\ \epsilon\right)}}\right)\not=0$,
$\epsilon=0,1$. By counting quantum dimensions of both sides, we
get (\ref{eq:non-diagonal-twisted}). By similar arguments, we can
prove (\ref{non-diagonal-diagonal})\emph{,} (\ref{diagonal-diagonal})
and (\ref{diagonal-twisted})\emph{. }

\emph{Proof of (\ref{eq:nondiagonal-nondiagonal}): }For $0\le i,j,p,q\le2k-1$
with $i>j,p>q$ and $i-j=p-q$, by fusion rules of irreducible $V_{\mathbb{Z}\beta}$
and $V_{\mathbb{Z}\beta}^{+}$ modules, we have $I_{\mathcal{U}}\left(_{\left(i\ j\right)\ \left(p\ q\right)}^{\widetilde{\left(\frac{i+j+p+q}{2}\ \epsilon\right)}}\right)\not=0$,
$\epsilon=0,1$. Since $i-j=p-q$, we write $\frac{i+j+p+q}{2}$ as
$p+j$, then $I_{\mathcal{U}}\left(_{\left(i\ j\right)\ \left(p\ q\right)}^{\widetilde{\left(p+j\ \epsilon\right)}}\right)\not=0$.
Besides, by (\ref{non-diagonal-diagonal}) and Proposition \ref{fusion rule symmmetry property},
we get $I_{\mathcal{U}}\left(_{\left(i\ j\right)\ \left(p\ q\right)}^{\widetilde{\left(p-i\ \epsilon\right)}}\right)\not=0$
for $\epsilon=0,1$. By counting quantum dimensions of both sides,
we get (\ref{eq:nondiagonal-nondiagonal}).

\emph{Proof of (\ref{twisted-twisted:one odd two even}) and (\ref{twisted-twisted:all odd or all even}): }

When $k+i$ and $k+j$ are both odd, by fusion rules of irreducible
$V_{\mathbb{Z}\beta}$ and $V_{\mathbb{Z}\beta}^{+}$ modules, we
get $I_{\mathcal{U}}\left(_{\widehat{\left(i\ \epsilon\right)}\ \widehat{\left(j\ \epsilon_{1}\right)}}^{\widetilde{\left(\frac{i+j}{2}\ \epsilon+\epsilon_{1}\right)}}\right)\not=0$,
$I_{\mathcal{U}}\left(_{\widehat{\left(i\ \epsilon\right)}\ \widehat{\left(j\ \epsilon_{1}\right)}}^{\widetilde{\left(k+\frac{i+j}{2}\ \epsilon+\epsilon_{1}+1\right)}}\right)\not=0$,
$\epsilon,\epsilon_{1}=0,1$. For $1\le r\le2k-1$ with $r$ even,
$ $ notice that $I_{V_{\mathbb{Z}\beta}^{+}}\left(_{V_{\mathbb{Z}\beta}^{T_{2},\pm}\ V_{\mathbb{Z}\beta}^{T_{2},\pm}}^{^{V_{\frac{r}{4k}\beta+\mathbb{Z}\beta}}}\right)\not=0$,
we obtain $I_{\mathcal{U}}\left(_{\widehat{\left(i\ \epsilon\right)}\ \widehat{\left(j\ \epsilon_{1}\right)}}^{\widetilde{\left(i+r\ j-r\right)}}\right)\not=0$.
By counting quantum dimensions of both sides, we get\emph{ }(\ref{twisted-twisted:one odd two even}).
By similar argument, we can prove (\ref{twisted-twisted:all odd or all even}).

\emph{Proof of} \emph{(\ref{twisted-twisted:one odd one even}): }By
fusion rules of irreducible $V_{\mathbb{Z}\beta}^{+}$-modules, we
see that $I_{V_{\mathbb{Z}\beta}^{+}}\left(_{V_{\mathbb{Z}\beta}^{T_{2},\pm}\ V_{\mathbb{Z}\beta}^{T_{1},\pm}}^{^{V_{\frac{r}{4k}\beta+\mathbb{Z}\beta}}}\right)$ is nonzero
for $1\le r\le2k-1$ with $r$ odd. By observing irreducible $\mathcal{U}$-modules
as listed in Proposition \ref{all modules} and applying fusion
rules of irreducible $V_{\mathbb{Z}\beta}$-modules, we obtain $I_{\mathcal{U}}\left(_{\widehat{\left(i\ \epsilon\right)}\ \widehat{\left(j\ \epsilon_{1}\right)}}^{\widetilde{\left(i+r\ j-r\right)}}\right)\not=0$.
By counting quantum dimensions, we prove the fusion product (\ref{twisted-twisted:one odd one even}).

\end{proof}

\end{document}